\documentclass[a4paper]{article}

\usepackage{chemformula} 
\usepackage[T1]{fontenc} 
\usepackage{amssymb}
\usepackage{amsthm}
\usepackage{amsmath}
\usepackage{mathtools}
\usepackage{subcaption}
\usepackage{color}
\usepackage{tikz}
\usepackage{epic}
\usepackage{pgfplots}
\usetikzlibrary{calc,patterns}
\usetikzlibrary{positioning} 
\usetikzlibrary{calc}
\usepackage{pgfplots}
\pgfplotsset{width=10cm,compat=1.9}
\usepackage{multirow}
\usepackage{rotating}
\usepackage{siunitx}
\sisetup{round-mode=places,round-precision=5}
\pgfplotsset{every tick label/.append style={font=\large}}

\usepackage{hyperref}
\usepackage[all]{hypcap}
\usepackage[a4paper, total={6.5in, 8.5in}]{geometry}

\definecolor{light_gray}{gray}{0.75}
\definecolor{lighter_gray}{gray}{0.5}
\colorlet{light_blue}{blue!20}
\definecolor{dark_green}{rgb}{0.0, 0.6, 0.0}
\definecolor{royal_blue}{rgb}{0.0, 0.22, 0.66}
\definecolor{salmon}{rgb}{1.0, 0.55, 0.41}
\definecolor{gold}{rgb}{0.8, 0.63, 0.21}
\definecolor{navy_blue}{rgb}{0.0, 0.0, 0.5}
\definecolor{crimson}{rgb}{0.79, 0.0, 0.09}
\definecolor{amethyst}{rgb}{0.6, 0.4, 0.8}
\definecolor{alizarin}{rgb}{0.82, 0.1, 0.26}
\definecolor{amaranth}{rgb}{0.9, 0.17, 0.31}
\definecolor{azure}{rgb}{0.0, 0.5, 1.0}
\definecolor{canaryyellow}{rgb}{0.82, 0.41, 0.12}
\definecolor{carrotorange}{rgb}{0.8, 0.33, 0.0}
\definecolor{cadmiumgreen}{rgb}{0.0, 0.42, 0.24}
\definecolor{copper}{rgb}{0.72, 0.45, 0.2}
\definecolor{aqua}{rgb}{0.5, 1.0, 0.83}
\definecolor{awesome}{rgb}{1.0, 0.13, 0.32}
\definecolor{candyapplered}{rgb}{1.0, 0.03, 0.0}
\definecolor{caribbeangreen}{rgb}{0.0, 0.8, 0.6}

\renewcommand{\vec}[1]{\ensuremath{\mathbf{#1}}}

\newcommand{\dx}{\mathrm{d}\mathbf{x}}

\newcommand{\bx}{\vec{x}}

\renewcommand{\iota}{\mathrm{i}}

\newcommand{\R}{\mathbb{R}}

\newcommand{\fictxj}{\bx^i_j}
\newcommand{\fictxk}{\bx^i_k}
\newcommand{\fictuj}{u^i_j}
\newcommand{\cellij}{K^i_j}

\newcommand{\beq}{\begin{equation}}
\newcommand{\eeq}{\end{equation}}

\newcommand{\Ni}{\mathcal N_i}

\newcommand{\Nis}{\mathcal N_i\backslash\{i\}}

\makeatletter

\makeatother

\makeatletter

\makeatother


\newcommand{\doi}[1]{\textrm{\textsc{doi:}} \href{http://dx.doi.org/#1}{\nolinkurl{#1}}}


\theoremstyle{remark}
\newtheorem{remark}{Remark}
\theoremstyle{theorem}
\newtheorem{lemma}{Lemma}
\newtheorem{theorem}{Theorem}
\newtheorem{corollary}{Corollary}

  \title{Well-balanced convex limiting for finite
    element discretizations of
    steady convection-diffusion-reaction equations}

\author{Petr Knobloch\footnote{Department of Numerical Mathematics, Faculty of Mathematics
and Physics, Charles University, Sokolovsk\'a 83, Praha 8, 18675,
Czech Republic, \texttt{knobloch@karlin.mff.cuni.cz}},  Dmitri Kuzmin\footnote{Institute of Applied Mathematics (LS III), TU Dortmund University,\\ Vogelpothsweg 87,
  D-44227 Dortmund, Germany, \texttt{kuzmin@math.uni-dortmund.de}},  Abhinav Jha\footnote{Institute of Applied Analysis and Numerical Simulation,
University of Stuttgart,  Pfaffenwaldring 57, 70569 Stuttgart, Germany, \texttt{abhinav.jha@ians.uni-stuttgart.de}}}

\date{}

\begin{document}
\maketitle
\begin{abstract}
  We address the numerical treatment of source terms in algebraic flux correction schemes for
  steady convection-diffusion-reaction (CDR) equations. The proposed algorithm constrains a continuous
  piecewise-linear finite element approximation using a monolithic convex limiting (MCL) strategy.
  Failure to discretize the convective derivatives  and source terms in a compatible manner produces
  spurious ripples, e.g., in regions where the coefficients of the continuous problem
  are constant and the exact solution is linear. We cure this deficiency by incorporating
  source term components into the fluxes and intermediate states of the MCL procedure.
  The design of our new limiter is motivated by the desire to preserve simple steady-state
  equilibria exactly, as in well-balanced schemes for the shallow water equations. 
  The results of our numerical experiments for two-dimensional CDR problems
  illustrate potential benefits of well-balanced flux limiting in the scalar case.
\end{abstract}
\textbf{Keywords:} convection-diffusion-reaction equations; discrete maximum principles; positivity preservation; algebraic flux correction; monolithic convex limiting; well-balanced schemes

\section{Introduction}\label{sec:intro}

Many modern numerical schemes for conservation laws are equipped with flux or slope limiters
that ensure the validity of discrete maximum principles. A comprehensive review of such
algorithms and of the underlying theory can be found, e.g., in \cite{kuzmin2023}. Matters
become more complicated in the case of inhomogeneous balance laws, especially if strong consistency with
some
steady-state solutions is desired. Discretizations that provide such consistency
are called \emph{well balanced} in the literature
\cite{audusse2015,fjordholm2011,noelle2006}. For example, a well-balanced
scheme for the system of shallow water equations (SWEs) should preserve at least lake-at-rest equilibria
(zero velocity, constant free surface elevation). In general, sources/sinks should be
discretized in a manner compatible with the numerical treatment of flux terms \cite{leveque1998}.
In the one-dimensional case, proper balancing can often be achieved by discretizing a
`homogeneous form’ of the balance law \cite{donat2011,gascon2001,sweby1989}. The design of
well-balanced
schemes for multidimensional problems is usually more difficult, especially if
the source term does not admit a natural representation as the gradient of a scalar
potential or divergence of a vector field.

A well-balanced and positivity-preserving finite element scheme for the inhomogeneous SWE system was
developed by Hajduk \cite{hajduk2022diss} using the framework of algebraic flux correction. The
monolithic convex limiting (MCL) algorithm presented in \cite{hajduk2022diss,hajduk2022} 
incorporates discretized bathymetry gradients into the numerical fluxes and intermediate states
of the spatial semi-discretization. In the present paper, we show that the source term of a
scalar convection-diffusion-reaction problem can be treated similarly. In particular, we
define numerical fluxes that ensure consistency of the well-balanced MCL approximation
with a linear steady state. Using a convex decomposition into intermediate states, we enforce
positivity preservation, as well as local {and global discrete} maximum principles. 

In Section \ref{sec:std_galerkin}, we discretize a model problem using the
standard continuous Galerkin {finite element} method. The
algorithm presented in Section  \ref{subsec:convective} stabilizes the convective part using the MCL methodology
for hyperbolic conservation laws \cite{Ku20,kuzmin2023}. 
The discretization of source terms is left unchanged
in this version. Our well-balanced generalization is derived in Section
\ref{sec:well_balanced}, {analyzed in Section
\ref{sec:analysis},} and tested numerically
in Section \ref{sec:numres}. The numerical examples with locally linear exact solutions show that improper
treatment of source terms may cause a flux-corrected finite element method to produce spurious ripples. The proposed approach provides an effective
remedy to this problem. Section \ref{sec:summary} closes the
 paper with a summary and discussion of the main findings.

\section{Model problem and Galerkin discretization}\label{sec:std_galerkin}

In computational fluid dynamics, steady convection-diffusion-reaction (CDR) equations are often used to
simulate distributions of scalar quantities of interest, such as temperature, energy, or concentration
of chemical species. Let $d\in \lbrace 1, 2,3\rbrace$ denote the number of space dimensions.
Choosing a domain $\Omega\subset \mathbb{R}^d$
with Lipschitz boundary $\Gamma = \partial \Omega$,
we consider the Dirichlet problem
\begin{subequations}\label{bvp}
  \begin{alignat}{3}\label{eq:cdr_eqn}
-\varepsilon \Delta u +  \mathbf{v}\cdot \nabla u+  c\,u  & =  f  &&\quad \mathrm{in}\ \ \Omega,\\
u & =  u_D \ &&\quad \mathrm{on}\ \ \Gamma_D,\label{eq:cdr_dbc}
\end{alignat}
\end{subequations}
where $u=u(\mathbf{x})$ is the unknown variable, $\varepsilon\ge 0$ is a constant diffusion coefficient,
$\mathbf{v}=\mathbf{v}(\mathbf{x})$ is a given velocity field, $c=c(\mathbf{x})$ is a nonnegative reaction
rate, and $f=f(\mathbf{x})$ is a general source term depending on the vector $\mathbf{x}
=(x_1,\ldots,x_d)^\top$ of space coordinates. In the case $\varepsilon>0$, the  Dirichlet  boundary
data $u_D$ is prescribed on $\Gamma_D=\Gamma$. In the case $\varepsilon=0$, equation
\eqref{eq:cdr_eqn} becomes hyperbolic and, therefore, condition \eqref{eq:cdr_dbc} is imposed
only on the inflow boundary $\Gamma_D\subseteq\Gamma$.

We are particularly interested in the case of dominating convection. Thus we assume that
$\|\mathbf{v}\|_{(L^{\infty}(\Omega))^d} \gg
\frac{\varepsilon}{L}$, where $L$ is the characteristic length of the problem. Because of this assumption,
an exact solution to \eqref{bvp} may exhibit interior and/or boundary layers, in
which the gradients are steep and standard finite element methods may violate
discrete maximum principles \cite{RST08}.
\smallskip

Let $\mathcal{T}_h$ be a conforming simplex mesh such that $\bigcup_{K\in \mathcal{T}_h}K = \overline{\Omega}$. The vertices of $\mathcal{T}_h$ are denoted by $\mathbf{x}_j,\ j\in\{1,\ldots,N_h\}$ and the maximum diameter of mesh cells $K\in \mathcal{T}_h$ by $h>0$. Restricting our discussion to linear finite elements in this paper, we express numerical approximations
\beq\label{uhdef}
u_h=\sum_{j=1}^{N_h}u_j\varphi_j
\eeq
in terms of Lagrange basis functions $\varphi_j\in C(\bar\Omega),\
j\in \{1,\ldots,N_h\}$ such that $\varphi_j|_K\in\mathbb{P}_1(K)\ \forall K\in\mathcal T_h$ and $\varphi_j(\mathbf{x}_i)=\delta_{ij}$ for $i\in \{1,\ldots,N_h\}
$. The corresponding finite element space is denoted by $V_h$.

We assume that the Dirichlet boundary nodes
are numbered using indices $M_h+1,\ldots,N_h$.
 Substituting
\eqref{uhdef} into the discretized weak form
$$
\int_\Omega\left(\varepsilon \nabla w_h\cdot\nabla u_h+ w_h[  \mathbf{v}\cdot \nabla u_h+  c\,u_h]\right)\dx
=\int_\Omega w_hf\dx
$$
of \eqref{eq:cdr_eqn}
and using test functions $w_h\in\{\varphi_1,\dots,\varphi_{M_h}\}$, we obtain a linear system for the unknown nodal values:
\begin{subequations}\label{gal}
  \begin{alignat}{3}\label{eq:gal_eq}
  \sum_{j=1}^{N_h}(a_{ij}^D+a_{ij}^C+a_{ij}^R)u_j&=b_i,&& i=1,\ldots,M_h,\\
  u_i&=u_D(\mathbf{x}_i),\qquad&& i=M_h+1,\ldots,N_h.\label{eq:gal_bc}
\end{alignat}
\end{subequations}
The coefficients of the involved matrices and vectors are given by
\begin{align*}
   a_{ij}^D&=\varepsilon\int_{\Omega}\nabla \varphi_i\cdot \nabla \varphi_j\dx, 
   \qquad
   a_{ij}^C=\int_{\Omega} \varphi_i\mathbf{v}\cdot\nabla \varphi_j\dx,\\
   a_{ij}^R&=\int_{\Omega}c\, \varphi_i\varphi_j\dx,\qquad
   b_i=\int_{\Omega}\varphi_if\dx.
\end{align*}
The matrices 
$A^D=(a_{ij}^D)_{i,j=1}^{N_h}, \ A^C=(a_{ij}^C)_{i,j=1}^{N_h}$, and 
$A^R=(a_{ij}^R)_{i,j=1}^{N_h}$
result from the discretization of the diffusive, convective, and reactive
terms, respectively. The contribution of the right-hand side $f$ is
represented by the vector $b=(b_i)_{i=1}^{M_h}$ of discretized source terms.
\smallskip

In the next section, we stabilize the discrete convection operator $A^C$ using an algebraic
flux correction scheme that satisfies a local discrete maximum principle (DMP)
in the case $\varepsilon=0$. To ensure the DMP property of the
discrete diffusion operator $A^D$ for $\varepsilon>0$,
  we assume that $a_{ij}^D\le 0$ for $j\ne i\in\{1,\ldots,N_h\}$. This requirement is
 met for simplex meshes of weakly acute type \cite{BJK23}.

\section{Convex limiting for convective terms}\label{subsec:convective}

Let $\mathcal N_i$ denote the set of indices $j$ such that the basis functions $\varphi_i$
and $\varphi_j$ have overlapping supports. 
For our purposes, it is worthwhile to write the $i$th equation of
\eqref{eq:gal_eq} in the form
\beq\label{galR}
a_i^Ru_i+\sum_{j\in\Ni\backslash\{i\}}(a_{ij}^D+a_{ij}^C+a_{ij}^R)(u_j-u_i)=b_i,
\eeq
where
$a_i^R=\sum_{j\in\Ni}a_{ij}^R$
is a diagonal entry of the `lumped' reactive mass matrix $\tilde A_R=(a_i^R\delta_{ij})_{i,j=1}^{N_h}$. 

Introducing an artificial diffusion (graph Laplacian)
operator $D=(d_{ij})_{i,j=1}^{N_h}$ with entries\footnote{We use
  a small constant $\delta>0$ 
to prevent
  division by zero without considering special cases.} 
$$
d_{ij}=\begin{cases}
\max\lbrace |a_{ij}^C|,\delta h^{d-1},|a_{ji}^C|\rbrace & \mbox{if}\
{j\in\Nis,}\\
{0}& {\mbox{if}\ j\not\in\Ni,}\\
-\sum_{k\in\Nis}d_{ik} & \mbox{if}\ j=i,
\end{cases}
$$
we define auxiliary \emph{bar states} $\bar u_{ij}$ and numerical fluxes
$f_{ij}$ for $j\in\Ni\backslash\{i\}$ as follows:
\begin{equation}\label{eq:ubar}
\bar u_{ij}=\frac{u_j+u_i}2-\frac{a_{ij}^C(u_j-u_i)}{2d_{ij}},\qquad
f_{ij}=(d_{ij}+a_{ij}^R)(u_i-u_j).
\end{equation}

It is easy to verify that the Galerkin discretization \eqref{galR} is equivalent to
\beq\label{gal:bar}
a_i^Ru_i-\sum_{j\in\Nis}[2d_{ij}(\bar u_{ij}-u_i)+f_{ij} -a_{ij}^D(u_j-u_i)]=b_i.
\eeq
The monolithic convex limiting (MCL) algorithm developed in \cite{Ku20} for
homogeneous hyperbolic problems replaces the
target flux $f_{ij}=-f_{ji}$ with an approximation $f_{ij}^*=-f_{ji}^*$ 
such that 
$$
\min_{j\in\Ni}u_j=:u_i^{\min}\le \bar u_{ij}^*:=\bar u_{ij}
+\frac{f_{ij}^*}{2d_{ij}}\le u_i^{\max}:=\max_{j\in\Ni}u_j.
$$
These inequality constraints imply the validity of local DMPs and are satisfied for
\begin{equation}\label{MClimiter}
f^*_{ij} =
	\begin{cases} 
	\min\left\lbrace f_{ij}, \min \left\lbrace 2d_{ij} \left(u_i^{\max}-\bar{u}_{ij}\right), 2d_{ij} \left(\bar{u}_{ji}-u_j^{\min}\right) \right\rbrace \right\rbrace &\text{if } f_{ij}>0, \\
	0 &\text{if } f_{ij}=0, \\
	\max\left\lbrace f_{ij}, \max \left\lbrace 2d_{ij} \left(u_i^{\min}-\bar{u}_{ij}\right), 2d_{ij} \left(\bar{u}_{ji}-u_j^{\max}\right)\right\rbrace \right\rbrace &\text{if } f_{ij}<0.
	\end{cases}
\end{equation}
To avoid division by $d_{ij}$ in the formula for $\bar u_{ij}$,
the products  $2d_{ij}\bar u_{ij}$ are calculated directly in
practical implementations of \eqref{MClimiter}. We refer the reader to
\cite{Ku20,kuzmin2023} for further explanations and proofs of local DMPs that are valid
in the limit of pure convection (i.e., for $\varepsilon=0,\ c\equiv 0$, $f\equiv 0$).

\section{Well-balanced convex limiting}\label{sec:well_balanced}

As mentioned in the introduction, a well-designed numerical scheme should be consistent with simple
steady-state equilibria. An exact solution of the CDR equation \eqref{eq:cdr_eqn} with
\beq\label{constcoeff}
\varepsilon\ge 0,\quad\mathbf{v}\equiv\hat{\mathbf{v}},\quad
    c\equiv 0,\quad  f\equiv  \hat f,
 \eeq
    where $\hat{\mathbf{v}}\in \R^d\backslash\{\mathbf 0\}$ and
    $\hat f\in\R\backslash\{0\}$ are constant, is given by
 \beq\label{uhatdef}
\hat u(\mathbf{x})=\hat f\ \frac{\mathbf{x}\cdot\hat{\mathbf{v}}}{|\hat{\mathbf{v}}|^2}.
\eeq
We denote by $|\cdot|$ the Euclidean norm of vectors in $\R^d$.
By the linearity of $\hat u$, we have
$$
 \hat{\mathbf{v}}\cdot\nabla \hat u-\varepsilon\Delta \hat u=
 \hat{\mathbf{v}}\cdot\nabla\hat u=\hat f.
 $$
The equilibrium state $\hat u$  is preserved exactly by the standard Galerkin discretization because the
linear function $\hat u(\mathbf{x})$ belongs to the space $V_h$. However, this desirable property
may be lost if an algebraic stabilization of convective terms is not balanced by an appropriate
modification of $b_i$.
\smallskip

To derive a well-balanced MCL scheme for problem \eqref{bvp} with
velocity $\mathbf{v}=\mathbf{v}(\mathbf{x})$ such that
$$|\mathbf{v}(\mathbf{x}_i)|+|\mathbf{v}(\mathbf{x}_j)|>0,
\qquad i=1,\dots,N_h,\quad j\in\Nis,
$$
 we introduce the 
\emph{balancing fluxes}
\beq\label{balflux}
P_{ij}=
\frac12\frac{s_i+s_j}2\frac{(\mathbf{x}_i-\mathbf{x}_j)\cdot
(\mathbf{v}(\mathbf{x}_i)+\mathbf{v}(\mathbf{x}_j))
}{2\,
\left(\max\{|\mathbf{v}(\mathbf{x}_i)|,|\mathbf{v}(\mathbf{x}_j)|\}\right)^2},\qquad
i=1,\dots,N_h,\quad j\in\Nis.
\eeq
In this formula, $s_i:=f(\mathbf{x}_i)-c(\mathbf{x}_i)u_i$ 
is the net source term of the CDR equation \eqref{eq:cdr_eqn} 
evaluated at the vertex $\mathbf{x}_i$. 
We replace the bar state $\bar u_{ij}$
 of representation \eqref{gal:bar}
with (cf. \cite{hajduk2022})
$$
\bar u_{ij}^s=\bar u_{ij}+\alpha_{ij}P_{ij}
+\frac{b_i}{a_i^C},\qquad
{i=1,\dots,M_h,\quad j\in\Nis,}
$$
where $a_i^C=\sum_{j\in\Nis}2d_{ij}$ and
$\alpha_{ij}=\alpha_{ji}$ is a correction factor to be defined below. By definition of $d_{ij}$, the
 coefficient $a_i^C$ is strictly positive. 
 Hence, no
 division by zero can occur. 
 \smallskip
 

The standard Galerkin discretization \eqref{galR} can now be expressed in terms of
$\bar u_{ij}^s$ and
\beq\label{fij}
f_{ij}^s=2d_{ij}\left[\frac{u_i-u_j}2-
\alpha_{ij}P_{ij}
  \right]+a_{ij}^R(u_i-u_j)
\eeq
as follows:
\beq\label{galerkin-wb}
a_i^Ru_i-\sum_{j\in\Nis}[2d_{ij}(\bar u_{ij}^s-u_i)+f_{ij}^s -a_{ij}^D(u_j-u_i)]=0.
\eeq
Note that we have distributed the source term $b_i$ among the bar states $\bar u_{ij}^s$
and stabilized these intermediate
states using the limited balancing fluxes $\alpha_{ij}P_{ij}$. A similar algebraic splitting
was used in \cite{hajduk2022diss,hajduk2022} to construct a well-balanced MCL scheme
for the SWE system. Our definition of 
$f_{ij}^s$ ensures that $f_{ij}^s=0$ if the coefficients of problem \eqref{bvp}
are given by \eqref{constcoeff}, $u_h=\hat u$, and $\alpha_{ij}=1$. This
enables us to preserve strong consistency at the corresponding steady state
(see Remark \ref{remark:WB} below).

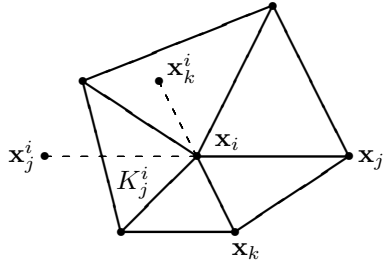
\begin{figure}[b]
\begin{center}
\setlength{\unitlength}{1cm}
\begin{picture}(4.7,3)(-0.35,1)
\thinlines

\dashline{0.1}[0.001](2,2)(1.5,3)
\dashline{0.1}[0.001](2,2)(0,2)

\thicklines

\put(2,2){\circle*{0.1}}
\put(4,2){\circle*{0.1}}
\put(3,4){\circle*{0.1}}
\put(0.5,3){\circle*{0.1}}
\put(1,1){\circle*{0.1}}
\put(2.5,1){\circle*{0.1}}
\put(1.5,3){\circle*{0.1}}
\put(0,2){\circle*{0.1}}
\drawline(4,2)(3,4)(0.5,3)(1,1)(2.5,1)(4,2)
\drawline(4,2)(2,2)(3,4)
\drawline(0.5,3)(2,2)(1,1)
\drawline(2,2)(2.5,1)

\put(2.4,2.19){\makebox(0,0){$\bx_i$}}
\put(2.65,0.75){\makebox(0,0){$\bx_k$}}
\put(4.3,1.95){\makebox(0,0){$\bx_j$}}
\put(-0.3,2){\makebox(0,0){$\fictxj$}}
\put(1.8,3.17){\makebox(0,0){$\fictxk$}}
\put(1.16,1.65){\makebox(0,0){$\cellij$}}
\end{picture}
\end{center}
\caption{Fictitious nodes.}
\label{fig:fict_node}
\end{figure}

To design an algorithm that produces $\alpha_{ij}=1$ for
$u_h=\hat u$ given by formula \eqref{uhatdef}, we introduce
fictitious nodes $\fictxj$ which are placed symmetrically to the nodes
$\bx_j$, $j\in\Nis$ with respect to $\bx_i$, i.e., $(\fictxj+\bx_j)/2=\bx_i$;
see Fig.~\ref{fig:fict_node}. We denote by $\fictuj$ the (fictitious) value of
$u_h$ at $\fictxj$. If the fictitious node is contained in one of the mesh
cells containing $\bx_i$ (like the node $\fictxk$ in Fig.~\ref{fig:fict_node}),
then we simply evaluate $u_h$ at this point. If this is not the case, 
then following \cite{Kno23}
we denote by $\cellij$ a mesh cell containing $\bx_i$ that is intersected by
the half line $\{\bx_i+\theta\,(\bx_i-\bx_j)\,:\,\,\theta>0\}$
(cf.~Fig.~\ref{fig:fict_node}), extend $u_h|_{\cellij}^{}$ to a first degree 
polynomial on $\mathbb{R}^d$ and evaluate this extension at
$\fictxj$. Thus, in both cases, we obtain
\begin{equation}\label{eq:fictuj}
   \fictuj=u_i+\nabla u_h|_{\cellij}^{}\cdot(\fictxj-\bx_i)
          =u_i+\nabla u_h|_{\cellij}^{}\cdot(\bx_i-\bx_j).
\end{equation}
Other definitions of values at fictitious nodes can be found e.g.~in
\cite{AD93,LMPP94,BB17}.

\begin{remark}
If $|\mathbf{v}(\mathbf{x}_i)|+|\mathbf{v}(\mathbf{x}_j)|$ is small,
then the magnitude of $P_{ij}$ may become large. In \eqref{fij} and
\eqref{galerkin-wb}, this is compensated by the multiplication by $d_{ij}$ 
that depends on $\mathbf{v}$.
\end{remark}

Let us now proceed to formulating appropriate
inequality constraints for well-balanced flux limiting.
The multiplication by the correction factor
$\alpha_{ij}=\alpha_{ji}$ in the formula for the flux $f_{ij}^s=-f_{ji}^s$
makes it possible to enforce the local discrete maximum principles
\begin{subequations}\label{dmpbar}
\begin{align}\label{eq:dmpbar1}
 u_i=u_i^{\max}\ \,\wedge\ \, & b_i\le 0\quad\Rightarrow\quad \bar u_{ij}^s\le \max\{u_i,u_j\}
 {\quad\forall\,\,j\in\Nis,}\\
 u_i=u_i^{\min}\ \,\wedge\ \, & b_i\ge 0\quad\Rightarrow\quad \bar u_{ij}^s\ge \min\{u_i,u_j\}
 {\quad\forall\,\,j\in\Nis,}\label{eq:dmpbar2}
\end{align}
\end{subequations}
{for $i=1,\dots,M_h$.} Adopting this design criterion, we use the auxiliary 
quantities
\begin{equation*}
  \begin{array}{l}
   \displaystyle
   Q_{ij}^+=\max\left\{\frac{\fictuj-u_i}2,\max\{u_i,u_j\}-\bar u_{ij}-\frac{b_i}{a_i^C}\right\},\\[5mm]
   \displaystyle
   Q_{ij}^-=\min\left\{\frac{\fictuj-u_i}2,\min\{u_i,u_j\}-\bar u_{ij}-\frac{b_i}{a_i^C}\right\},
   \end{array}\qquad i=1,\dots,M_h,\quad j\in\Nis
\end{equation*}
 to define
\begin{equation*}
  R_{ij}=\begin{cases}
  \displaystyle
  \frac{Q_{ij}^+}{P_{ij}} & \mbox{if}\ b_i\le 0\ \mbox{and}\ P_{ij}>Q_{ij}^+,\\[3mm]
  \displaystyle
  \frac{Q_{ij}^-}{P_{ij}} & \mbox{if}\ b_i\ge 0\ \mbox{and}\ P_{ij}<Q_{ij}^-,\\[3mm]
  1 & \mbox{otherwise},
  \end{cases}
  \qquad{i=1,\dots,M_h,\quad j\in\Nis.}
\end{equation*}
{Furthermore, we set
\begin{equation*}
  R_{ij}=1,\qquad i=M_h+1,\dots,N_h,\quad j\in\Nis.
\end{equation*}
Then we define}
\begin{equation*}
   \alpha_{ij}=\min\{R_{ij},R_{ji}\},\qquad
   {i=1,\dots,N_h,\quad j\in\Nis.}
\end{equation*}
This limiting strategy yields $\alpha_{ij}\in[0,1]$ such that
$\alpha_{ij}=\alpha_{ji}$ and conditions \eqref{dmpbar} are satisfied.

\begin{remark}
In \eqref{dmpbar}, both $\max\{u_i,u_j\}$ and $\min\{u_i,u_j\}$ are equal to $u_i$
but we use the present formulation to establish a correspondence to the
definition of $Q_{ij}^\pm$. Note that, under the sign conditions on $b_i$, the
left-hand side inequalities of \eqref{dmpbar} hold not only under the
assumption that $u_i$ is a local maximum or minimum, but also when
$(u^i_j-u_i)/2$ is dominated by the second term in the definition of $Q_{ij}^+$
or $Q_{ij}^-$. In particular, this is the case if $u^i_j-u_i$ has the same sign
as $b_i$. Replacing $\max\{u_i,u_j\}$ and $\min\{u_i,u_j\}$ by $u_i$ in the
definitions of $Q_{ij}^\pm$ and in \eqref{dmpbar} would be too restrictive
since then the left-hand side inequalities of \eqref{dmpbar} would hold for a much smaller
class of functions.
\end{remark}

In a practical implementation, we calculate the limited balancing fluxes
\begin{equation}\label{eq:alpha_rel1}
   \alpha_{ij}P_{ij}
   =\mathrm{sgn}(P_{ij})\min\{R_{ij}|P_{ij}|,R_{ji}|P_{ji}|\},\qquad
   {i=1,\dots,N_h,\quad j\in\Nis}
\end{equation}
directly to avoid possible division by zero in finite precision arithmetic.
Note that
$Q_{ij}^+\ge 0$ if $b_i\le 0$ and 
$Q_{ij}^-\le 0$ if $b_i\ge 0$. It follows that 
\begin{equation}\label{eq:alpha_rel2}
  R_{ij}|P_{ij}|=\begin{cases}
  \mathrm{sgn}(P_{ij})\min\{P_{ij},Q_{ij}^+\}& \mbox{if}\ b_i<0\ 
  \mbox{or}\ (b_i=0\ \mbox{and}\ P_{ij}\ge0),\\
  \mathrm{sgn}(P_{ij})\max\{P_{ij},Q_{ij}^-\}& \mbox{if}\ b_i>0\
  \mbox{or}\ (b_i=0\ \mbox{and}\ P_{ij}\le0)
  \end{cases}
\end{equation}
for $i=1,\dots,M_h$ and $j\in\Nis$.

\smallskip

In the context of scalar CDR problems, we define the limited approximation
\begin{equation}\label{eq:limited_flux}
f_{ij}^{s,*} =
	\begin{cases} 
	\min\left\lbrace f_{ij}^s, \min \left\lbrace 2d_{ij}
\left(\bar u_i^{\max}-\bar{u}_{ij}^s\right), 2d_{ij}
\left(\bar{u}_{ji}^s-\bar u_j^{\min}\right) \right\rbrace \right\rbrace &\text{if }
f_{ij}^s>0, \\
	0 &\text{if } f_{ij}^s=0, \\
	\max\left\lbrace f_{ij}^s, \max \left\lbrace 2d_{ij}
\left(\bar u_i^{\min}-\bar{u}_{ij}^s\right), 2d_{ij}
\left(\bar{u}_{ji}^s-\bar u_j^{\max}\right)\right\rbrace \right\rbrace &\text{if }
f_{ij}^s<0
	\end{cases}
\end{equation}
to $f_{ij}^s$ using the low-order bar states $\bar u_{ij}^s$ to construct 
the local bounds
$$
\bar u_i^{\min}:=\min_{j\in\Nis}\bar u_{ij}^s,\qquad
\bar u_i^{\max}:=\max_{j\in\Nis}\bar u_{ij}^s.
$$
This limiting strategy ensures that
the flux-corrected intermediate states 
$$\bar u_{ij}^{s,*}=\bar u_{ij}^s+\frac{f_{ij}^{s,*}}{2d_{ij}}$$
satisfy the inequality constraints
\begin{equation}\label{eq:constraints}
   \bar u_i^{\min} \le \bar u_{ij}^{s,*}\le \bar u_i^{\max}.
\end{equation}
{Note that \eqref{eq:limited_flux} is well defined only if both
indices $i$ and $j$ refer to interior nodes (and $j\in\Nis$ as usual). If 
$i\in\{1,\dots,M_h\}$ and $j\in\Ni\cap\{M_h+1,\dots,N_h\}$, we set}
\begin{equation}\label{eq:limited_flux2}
f_{ij}^{s,*} =
	\begin{cases} 
	\min\left\lbrace f_{ij}^s, 
        2d_{ij} \left(\bar u_i^{\max}-\bar{u}_{ij}^s\right) \right\rbrace 
        &\text{if } f_{ij}^s>0, \\
	0 &\text{if } f_{ij}^s=0, \\
	\max\left\lbrace f_{ij}^s, 
        2d_{ij} \left(\bar u_i^{\min}-\bar{u}_{ij}^s\right) \right\rbrace 
        &\text{if } f_{ij}^s<0.
	\end{cases}
\end{equation}
{This again guarantees that the constraints
\eqref{eq:constraints} hold.}
\smallskip

Our well-balanced MCL scheme for problem \eqref{bvp} can be written in the `homogeneous' form
\begin{subequations}\label{wmc}
  \begin{alignat}{3}\label{mcl:bar}
a_i^Ru_i-\sum_{j\in\Nis}[2d_{ij}(\bar u_{ij}^{s,*}-u_i)-a_{ij}^D(u_j-u_i)]&=0,
&& {i=1,\dots,M_h,}\\
  {u_i}&{=u_D(\mathbf{x}_i),}\qquad&& {i=M_h+1,\dots,N_h},\label{eq:wmc_bc}
\end{alignat}
\end{subequations}
in which the source terms are incorporated 
into the bar states $\bar u_{ij}^{s,*}$ (similarly to
 \cite{hajduk2022diss,hajduk2022}).

\begin{remark}
In practice, $u_i$ can be calculated using the
bound-preserving fixed-point iteration
$$
u_i^{n+1}=\frac{1}{a_i}
\sum_{j\in\Nis}[2d_{ij}\bar u_{ij}^{s,*,n}-a_{ij}^Du_j^n],
$$
where 
$a_i=a_i^R+a_i^C-\sum_{j\in\Nis}a_{ij}^D>0$. Each update produces a linear combination
of the states that appear on the right-hand side.
In view of our assumption that
$a_{ij}^D\le 0$ for $j\ne i$, all weights are nonnegative. Moreover, they
add up to unity if $a_i^R=0$.
\end{remark}

\begin{remark}\label{remark:WB}
If the coefficients of problem \eqref{bvp} are given by \eqref{constcoeff},
then $s_i=s_j=\hat f$. Moreover, $c\equiv 0$ implies $a_{ij}^R=0$. Substituting
the nodal values $u_i=\hat u(\mathbf{x}_i)$ of the exact steady-state solution
\eqref{uhatdef} into the definition \eqref{balflux} of the balancing flux, we
find that $P_{ij}=(u_i-u_j)/2$. In addition, since $u_h$ is a first degree
polynomial in $\Omega$, one has $\fictuj-u_i=u_i-u_j$ due to \eqref{eq:fictuj}.
Hence, by definition of $Q_{ij}^\pm$, it follows that $\alpha_{ij}=1$ and
$f_{ij}^{s}=0=f_{ij}^{s,*}$. Therefore, the flux-corrected scheme
\eqref{mcl:bar} coincides with the equilibrium-preserving Galerkin
discretization \eqref{galerkin-wb}. This proves that \eqref{mcl:bar} is well
balanced.
\end{remark}

\begin{remark}\label{remark:simpler}
  To avoid the evaluation of $u_h$ at fictitious nodes, a simplified
  version of the above algorithm calculates the correction factors $\alpha_{ij}$ using
\begin{equation*}
   Q_{ij}^+=\max\{u_i,u_j\}-\bar u_{ij}-\frac{b_i}{a_i^C},\qquad
   Q_{ij}^-=\min\{u_i,u_j\}-\bar u_{ij}-\frac{b_i}{a_i^C},
\end{equation*}
for $i=1,\dots,M_h$, $j\in\Nis$. Then, instead of \eqref{dmpbar}, one has the stronger properties
\begin{align*}
 & b_i\le 0\quad\Rightarrow\quad \bar u_{ij}^s\le \max\{u_i,u_j\}
 \quad\forall\,\,j\in\Nis,\\
 & b_i\ge 0\quad\Rightarrow\quad \bar u_{ij}^s\ge \min\{u_i,u_j\}
 \quad\forall\,\,j\in\Nis,
\end{align*}
for $i=1,\dots,M_h$. The theoretical results that we prove in the
next section remain valid. However, the assertion of Remark~\ref{remark:WB} is
not true in general any more for this version of MCL. Nevertheless,
one can still prove that the scheme \eqref{mcl:bar} is well balanced on some
types of uniform meshes. On general meshes, the scheme may be not
well balanced, as numerical experiments show.
\end{remark}

\section{Solvability and discrete maximum principle}\label{sec:analysis}

In this section, we investigate
the solvability of the nonlinear problem \eqref{wmc} and the
validity of local and global discrete maximum principles. All results will be proven under the assumption that
$\varepsilon>0$. To prove the solvability, additional assumptions on the data
will be made as well.

First we cast \eqref{mcl:bar} into a form that will be convenient for our
analysis. It follows from \eqref{eq:ubar} that
\begin{equation}\label{eq:ub2}
   d_{ij}(u_i-u_j)=2d_{ij}(u_i-\bar u_{ij})+a_{ij}^C(u_i-u_j)\,.
\end{equation}
Substituting \eqref{eq:ub2} into \eqref{fij}, one obtains
\begin{equation}\label{eq:fijq2}
   f_{ij}^s=2d_{ij}(u_i-\bar u_{ij}^s)+(a_{ij}^C+a_{ij}^R)(u_i-u_j)
            +2d_{ij}\frac{b_i}{a_i^C}.
\end{equation}
Using this expression, it is easy to verify that \eqref{mcl:bar} can be
equivalently written in the form
\begin{equation}\label{wmc2}
   a_i^Ru_i+\sum_{j\in\Nis}(a_{ij}^D+a_{ij}^C+a_{ij}^R)(u_j-u_i)
   +\sum_{j\in\Nis}(f_{ij}^s-f_{ij}^{s,*})=b_i,\quad i=1,\dots,M_h.
\end{equation}

The solvability proof will be based on the following consequence of the
Brouwer fixed-point theorem.

\begin{lemma}\label{lem:Temam}
Let $X$ be a finite-dimensional Hilbert space with inner product
$(\cdot,\cdot)_X$ and norm $\|\cdot\|_X^{}$. Let $T:X\to X$ be a continuous
mapping and $K>0$ a real number such that $(Tx,x)_X>0$ for any $x\in X$ with
$\|x\|_X^{}=K$.  Then there exists $x\in X$ such that $\|x\|_X^{}\le K$ and 
$Tx=0$.
\end{lemma}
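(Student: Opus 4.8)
The plan is to argue by contradiction and reduce the claim to the Brouwer fixed-point theorem. Suppose, for contradiction, that $Tx\neq 0$ for every $x$ in the closed ball $\overline B=\{x\in X:\|x\|_X^{}\le K\}$. Since $X$ is a finite-dimensional Hilbert space, it is linearly isometric to $\mathbb{R}^n$ via an orthonormal basis, so $\overline B$ is a compact convex set and Brouwer's theorem is applicable to continuous self-maps of $\overline B$.

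Under this contradiction hypothesis I would introduce the auxiliary map $g:\overline B\to X$ defined by
$$
g(x)=-K\,\frac{Tx}{\|Tx\|_X^{}}\,.
$$
Because $T$ is continuous and $Tx\neq 0$ on $\overline B$, the denominator never vanishes, so $g$ is continuous. Moreover $\|g(x)\|_X^{}=K$ for every $x$, whence $g$ maps $\overline B$ into the sphere of radius $K$ and, in particular, into $\overline B$ itself. Brouwer's fixed-point theorem then provides a point $x^*\in\overline B$ with $g(x^*)=x^*$.

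The contradiction is obtained by testing the hypothesis at $x^*$. Since $x^*=g(x^*)$ lies on the sphere of radius $K$, we have $\|x^*\|_X^{}=K$, so the assumption $(Tx,x)_X>0$ for $\|x\|_X^{}=K$ applies to $x^*$. On the other hand, substituting $x^*=-K\,Tx^*/\|Tx^*\|_X^{}$ into the inner product yields
$$
(Tx^*,x^*)_X=-\frac{K}{\|Tx^*\|_X^{}}\,(Tx^*,Tx^*)_X=-K\,\|Tx^*\|_X^{}<0,
$$
which contradicts the hypothesis. Hence the assumption $Tx\neq 0$ on $\overline B$ fails, and there exists $x$ with $\|x\|_X^{}\le K$ such that $Tx=0$.

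The argument is essentially mechanical once the correct auxiliary map is chosen; the only point requiring care is the reduction to Brouwer's theorem. Specifically, I would verify that $g$ is well defined (this is exactly where the contradiction hypothesis $Tx\neq 0$ enters) and continuous, and that $\overline B$ is compact and convex, which holds automatically in finite dimensions but would fail in a general Hilbert space. I do not anticipate any serious obstacle beyond invoking Brouwer correctly and noting that the resulting fixed point necessarily lands on the boundary sphere, so that the sign hypothesis is indeed available there.
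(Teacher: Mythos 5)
Your proof is correct: the paper itself does not reproduce an argument but simply cites Temam \cite[p.~164, Lemma 1.4]{Te77}, and the contradiction argument you give---applying Brouwer's fixed-point theorem to $g(x)=-K\,Tx/\|Tx\|_X^{}$ on the closed ball and evaluating $(Tx^*,x^*)_X=-K\|Tx^*\|_X^{}<0$ at the resulting fixed point on the sphere---is exactly the classical proof found in that reference. Nothing is missing; the only points needing care (well-definedness of $g$ under the contradiction hypothesis, compactness and convexity of the ball in finite dimensions, and the fact that the fixed point lies on the sphere so the sign hypothesis applies) are all addressed.
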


\begin{proof}See \cite[p.~164, Lemma 1.4]{Te77}.
\end{proof}

\begin{theorem}
Let the data of \eqref{bvp} satisfy $\varepsilon>0$, $\nabla\cdot\mathbf{v}=0$,
and $c\equiv0$. Then the nonlinear problem \eqref{wmc} has a solution.
\end{theorem}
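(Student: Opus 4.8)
The plan is to apply the Brouwer-type fixed-point criterion of Lemma~\ref{lem:Temam}. I would work with the equivalent residual form \eqref{wmc2} and take $X=\mathbb R^{M_h}$, the space of interior nodal values $\mathbf u_I=(u_1,\dots,u_{M_h})^\top$, equipped with the Euclidean inner product $(\cdot,\cdot)_X$. For $\mathbf u_I\in X$ I extend it to a full vector $\mathbf u$ by fixing the Dirichlet values $u_j=u_D(\mathbf x_j)$ for $j>M_h$, and define the map $T\colon X\to X$ by letting $(T\mathbf u_I)_i$ be the left-hand side minus the right-hand side of \eqref{wmc2}. Zeros of $T$ are exactly the solutions of \eqref{wmc}, so it suffices to produce $K>0$ with $(T\mathbf u_I,\mathbf u_I)_X>0$ on the sphere $\|\mathbf u_I\|_X^{}=K$. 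Continuity of $T$ follows because all ingredients ($\bar u_{ij}$, the products $\alpha_{ij}P_{ij}$ computed via \eqref{eq:alpha_rel1}--\eqref{eq:alpha_rel2}, and the $\min/\max$ limiters \eqref{eq:limited_flux}, \eqref{eq:limited_flux2}) depend continuously on $\mathbf u$.

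I would then split $(T\mathbf u_I,\mathbf u_I)_X$ into its standard-Galerkin part and the limited-flux part and estimate each. Since $c\equiv 0$, one has $a_{ij}^R=0$ and $a_i^R=0$, so the reactive contribution drops out. Using the row-sum identity $\sum_{j\in\Ni}a_{ij}^D=0$, the diffusive contribution equals $\mathbf u_I^\top A^D_{II}\mathbf u_I+\mathbf u_I^\top A^D_{IB}\mathbf u_B$; because $\varepsilon>0$ the interior stiffness block $A^D_{II}$ is symmetric positive definite, so its quadratic part is bounded below by $c_0\|\mathbf u_I\|_X^2$ with $c_0>0$. For the convective contribution, $\sum_{j\in\Ni}a_{ij}^C=0$ turns it into $\mathbf u_I^\top A^C_{II}\mathbf u_I+\mathbf u_I^\top A^C_{IB}\mathbf u_B$; the hypothesis $\nabla\cdot\mathbf v=0$ together with the vanishing of interior basis functions on $\Gamma$ makes $a_{ij}^C+a_{ji}^C=0$ for interior $i,j$, so $A^C_{II}$ is skew-symmetric and its quadratic form vanishes. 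The remaining boundary cross terms and the source pairing $-\mathbf b^\top\mathbf u_I$ are linear in $\mathbf u_I$ and hence bounded by $C_1\|\mathbf u_I\|_X^{}+C_2$.

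The main obstacle is the limited-flux term $\sum_{i\le M_h}u_i\sum_{j\in\Nis}(f_{ij}^s-f_{ij}^{s,*})$, which a priori could grow quadratically. Here the crucial point is that with $c\equiv0$ the net source $s_i=f(\mathbf x_i)$ does not depend on $\mathbf u$, so by \eqref{balflux} the balancing fluxes $P_{ij}$ and the loads $b_i$ are fixed constants and $|\alpha_{ij}P_{ij}|\le|P_{ij}|$ is bounded. Writing $g_{ij}:=f_{ij}^s-f_{ij}^{s,*}$ and recalling \eqref{fij}, i.e.\ $f_{ij}^s=d_{ij}(u_i-u_j-2\alpha_{ij}P_{ij})$, the limiters \eqref{eq:limited_flux}, \eqref{eq:limited_flux2} keep $g_{ij}$ of the same sign as $f_{ij}^s$ with $|g_{ij}|\le|f_{ij}^s|$. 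For interior pairs I would antisymmetrize (using $g_{ij}=-g_{ji}$ and $d_{ij}=d_{ji}$), reducing the contribution to $\tfrac12\sum g_{ij}(u_i-u_j)$, and argue by cases: when $|u_i-u_j|>2|P_{ij}|$ the sign of $f_{ij}^s$ agrees with that of $u_i-u_j$, so $g_{ij}(u_i-u_j)\ge0$; otherwise $|u_i-u_j|$ is bounded and hence so are $|g_{ij}|\le|f_{ij}^s|$ and the product. Likewise, for interior-boundary pairs $g_{ij}u_i\ge0$ once $|u_i|$ is large (the sign of $f_{ij}^s$ then matches that of $u_i$), and is bounded otherwise. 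In every case the summand is bounded below by a constant, so the whole limited-flux term is $\ge -C_3$.

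Combining the three estimates gives $(T\mathbf u_I,\mathbf u_I)_X\ge c_0\|\mathbf u_I\|_X^2-C_1\|\mathbf u_I\|_X^{}-(C_2+C_3)$, which is strictly positive for $\|\mathbf u_I\|_X^{}=K$ sufficiently large. Lemma~\ref{lem:Temam} then yields $\mathbf u_I\in X$ with $T\mathbf u_I=0$, i.e.\ a solution of \eqref{wmc}. I expect the sign-and-boundedness analysis of the limited fluxes to be the delicate step, since it is precisely where the hypotheses $c\equiv0$ (keeping $P_{ij}$ and $b_i$ independent of $\mathbf u$) and $\nabla\cdot\mathbf v=0$ (annihilating the convective quadratic form) enter; without them the quadratic lower bound could fail.
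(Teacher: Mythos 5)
Your proposal is correct and follows the same skeleton as the paper: the Brouwer-type criterion of Lemma~\ref{lem:Temam} applied to the residual map built from \eqref{wmc2}, continuity of all limited quantities, coercivity of the Galerkin part from $\varepsilon>0$, $\nabla\cdot\mathbf{v}=0$, $c\equiv0$ (your matrix-block argument, $A^D_{II}$ SPD and $A^C_{II}$ skew-symmetric, is just the discrete face of the paper's integral identity $\frac12\int_\Omega\nabla\cdot(\mathbf{v}v_h^2)\dx=0$), and the observation that $c\equiv0$ makes $P_{ij}$ independent of $u$. Where you genuinely diverge is the treatment of the limited-flux term $\sum_i u_i\sum_j(f_{ij}^s-f_{ij}^{s,*})$. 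The paper introduces a symmetric matrix of correction factors $b_{ij}=(f_{ij}^s-f_{ij}^{s,*})/f_{ij}^s\in[0,1]$ and splits the sum into a nonnegative quadratic form $\frac12\sum b_{ij}d_{ij}(z_i-z_j)^2\ge0$ plus terms that are linear in $v$; this is more structural and exhibits the flux correction as a nonnegative artificial-diffusion contribution. You instead antisymmetrize $g_{ij}=f_{ij}^s-f_{ij}^{s,*}$ directly and run a sign-versus-magnitude case analysis ($|u_i-u_j|$ large makes $g_{ij}(u_i-u_j)\ge0$ because $|\alpha_{ij}P_{ij}|\le|P_{ij}|$ is a fixed bound; otherwise everything is bounded), concluding only that the term is $\ge -C_3$. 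Both suffice for the quadratic lower bound on $(Tv,v)$, and your case analysis is arguably more elementary, though it yields a weaker structural statement. One point you should not gloss over: the continuity of $u\mapsto\alpha_{ij}P_{ij}$ and $u\mapsto f_{ij}^{s,*}$ is not automatic from ``min/max are continuous,'' because the defining formulas branch on the signs of $P_{ij}$ and $f_{ij}^s$; the paper handles the zero sets explicitly via the estimates $|\alpha_{ij}P_{ij}|\le|P_{ij}|$ and $|f_{ij}^{s,*}|\le|f_{ij}^s|$, and your write-up should do the same to be complete.
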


\begin{proof}
The fluxes $f_{ij}^s$ and $f_{ij}^{s,*}$ are functions of the coefficient
vectors $u:=(u_1,\dots,u_{N_h})^\top\in{\mathbb R}^{N_h}$. Let us first
investigate whether they depend on $u$ in a continuous way. We will proceed
step by step. To show that the bar states $\bar u_{ij}^s$ are continuous
functions of $u$, it suffices to investigate the continuity of
\eqref{eq:alpha_rel1}. Since minimum and maximum are continuous functions, the
functions $Q_{ij}^+$ and $Q_{ij}^-$ are continuous. If 
$\bar u\in{\mathbb R}^{N_h}$ is such that $P_{ij}(\bar u)\neq0$, then
$P_{ij}\neq0$ in a neighborhood of $\bar u$ and hence $R_{ij}|P_{ij}|$ and
$R_{ji}|P_{ji}|$ are continuous in this neighborhood in view of
\eqref{eq:alpha_rel2}. Thus, $\alpha_{ij}P_{ij}$ is continuous at $\bar u$ due
to \eqref{eq:alpha_rel1}. Moreover, if $P_{ij}(\bar u)=0$, one obtains
\begin{equation}\label{eq:cont_est}
   \left|(\alpha_{ij}P_{ij})(u)-(\alpha_{ij}P_{ij})(\bar u)\right|
   =|\alpha_{ij}P_{ij}|(u)\le|P_{ij}(u)|=|P_{ij}(u)-P_{ij}(\bar u)|
\end{equation}
so that $\alpha_{ij}P_{ij}$ is continuous at $\bar u$ also in this case.
Therefore, the function in \eqref{eq:alpha_rel1} is continuous on ${\mathbb
R}^{N_h}$. Consequently, $\bar u_{ij}^s$, $f_{ij}^s$, $\bar u_i^{\min}$, and
$\bar u_i^{\max}$ are continuous on ${\mathbb R}^{N_h}$. Then, if
$f_{ij}^s(\bar u)\neq0$ for some $\bar u\in{\mathbb R}^{N_h}$, it follows from
\eqref{eq:limited_flux} and \eqref{eq:limited_flux2} that $f_{ij}^{s,*}$ is
continuous in a neighborhood of $\bar u$. If $f_{ij}^s(\bar u)=0$, then the
continuity of $f_{ij}^{s,*}$ at $\bar u$ follows as in \eqref{eq:cont_est}
since $|f_{ij}^{s,*}|\le|f_{ij}^s|$ due to \eqref{eq:limited_flux} and
\eqref{eq:limited_flux2}.

A coefficient vector $u=(u_1,\dots,u_{N_h})^\top$ solving \eqref{wmc} can be split
into the vectors $u_{\rm I}:=(u_1,\dots,u_{M_h})^\top$ and 
$u_{\rm B}:=(u_{M_h+1},\dots,u_{N_h})^\top
=(u_D(\mathbf{x}_{M_h+1}),\dots,u_D(\mathbf{x}_{N_h}))^\top$. Let us define a
mapping $T:{\mathbb R}^{M_h}\to{\mathbb R}^{M_h}$ by
\begin{equation*}
   (Tv)_i=\sum_{j=1}^{M_h}(a_{ij}^D+a_{ij}^C+a_{ij}^R)v_j
   +\sum_{j\in\Nis}(f_{ij}^s-f_{ij}^{s,*})(v,u_{\rm B})-g_i,\quad 
   i=1,\dots,M_h,\quad v\in{\mathbb R}^{M_h},
\end{equation*}
where
\begin{equation*}
   g_i=b_i-\sum_{j=M_h+1}^{N_h}(a_{ij}^D+a_{ij}^C+a_{ij}^R)u_D(\mathbf{x}_j),
   \quad i=1,\dots,M_h.
\end{equation*}
Then $T$ is continuous and, since \eqref{mcl:bar} and \eqref{wmc2} are 
equivalent, a vector $u\in{\mathbb R}^{N_h}$ satisfying \eqref{eq:wmc_bc} 
solves \eqref{mcl:bar} if and only if $Tu_{\rm I}=0$. Thus, in view of
Lemma~\ref{lem:Temam}, to prove the solvability of \eqref{wmc}, it suffices to
analyze the product $(Tv,v)$, where $(\cdot,\cdot)$ denotes the Euclidean
inner product on ${\mathbb R}^{M_h}$.

Introducing
\begin{equation*}
   v_h=\sum_{j=1}^{M_h}v_j\varphi_j
\end{equation*}
and using the assumptions of the theorem, we deduce that
\begin{align*}
   \sum_{i,j=1}^{M_h}v_i(a_{ij}^D+a_{ij}^C+a_{ij}^R)v_j
   &=\int_\Omega\left(\varepsilon|\nabla v_h|^2
   + v_h[  \mathbf{v}\cdot \nabla v_h+  c\,v_h]\right)\dx\\
   &=\varepsilon\|v_h\|_{H^1_0(\Omega)}^2
   +\frac12\int_\Omega\nabla\cdot(\mathbf{v}v_h^2)\dx
   =\varepsilon\|v_h\|_{H^1_0(\Omega)}^2.
\end{align*}
Thus, it follows from the equivalence of norms on finite-dimensional spaces
that there is a positive constant $C_1$ independent of $v$ such that
\begin{equation}\label{eq:est1}
   \sum_{i,j=1}^{M_h}v_i(a_{ij}^D+a_{ij}^C+a_{ij}^R)v_j\ge C_1\|v\|^2,
\end{equation}
where $\|\cdot\|$ is the Euclidean norm on ${\mathbb R}^{M_h}$. To estimate the
term with the fluxes, let us first introduce a matrix $(b_{ij})_{i,j=1}^{N_h}$
of correction factors 
\begin{alignat*}{2}
  &b_{ij}=\begin{cases}
  \frac{\displaystyle f_{ij}^s-f_{ij}^{s,*}}{\displaystyle f_{ij}^s}& 
  \mbox{if}\ j\in\Nis \ \mbox{and} \ f_{ij}^s\neq0,\\
  0\hspace*{15mm} & \mbox{otherwise},
  \end{cases}\qquad&&i=1,\dots,M_h,\\[1mm]
  &b_{ij}=\begin{cases}
  b_{ji} & \mbox{for} \ j=1,\dots,M_h,\\
  0\hspace*{15mm} & \mbox{otherwise},
  \end{cases}\qquad&&i=M_h+1,\dots,N_h.
\end{alignat*}
Then
\begin{equation*}
   b_{ij}=b_{ji}\quad\mbox{and}\quad b_{ij}\in[0,1]\qquad
   \forall\,\,i,j=1,\dots,N_h.
\end{equation*}
Moreover,
\begin{equation*}
   \sum_{j\in\Nis}(f_{ij}^s-f_{ij}^{s,*})(v,u_{\rm B})
   =\sum_{j\in\Nis}(b_{ij}f_{ij}^s)(v,u_{\rm B}),
   \quad i=1,\dots,M_h.
\end{equation*}
Denoting $z:=(v,u_{\rm B})$, one has
$$
   f_{ij}^s(v,u_{\rm B})=d_{ij}(z_i-z_j)-2d_{ij}\alpha_{ij}(z)P_{ij},
$$
where $P_{ij}$ is independent of $z$ since $c\equiv0$. Thus,
\begin{align*}
   &\sum_{i=1}^{M_h}\sum_{j\in\Nis}v_i(f_{ij}^s-f_{ij}^{s,*})(v,u_{\rm B})\\
   &\hspace*{20mm}
   =\sum_{i=1}^{M_h}\sum_{j=1}^{N_h}b_{ij}(z)d_{ij}(z_i-z_j)z_i
   -2\sum_{i=1}^{M_h}\sum_{j\in\Nis}d_{ij}(b_{ij}\alpha_{ij})(z)P_{ij}v_i=I-II
\end{align*}
with
\begin{align*}
   I&=\sum_{i,j=1}^{N_h}b_{ij}(z)d_{ij}(z_i-z_j)z_i,\\
   II&=\sum_{i=M_h+1}^{N_h}\sum_{j=1}^{N_h}b_{ij}(z)d_{ij}(z_i-z_j)z_i
   +2\sum_{i=1}^{M_h}\sum_{j\in\Nis}d_{ij}(b_{ij}\alpha_{ij})(z)P_{ij}v_i.
\end{align*}
Interchanging $i$ and $j$ in the formula defining $I$ and using the fact that
$b_{ji}(z)d_{ji}=b_{ij}(z)d_{ij}$, one obtains
\begin{equation*}
   I=\sum_{i,j=1}^{N_h}b_{ij}(z)d_{ij}(z_j-z_i)z_j,
\end{equation*}
which implies that
\begin{equation}\label{eq:est2}
   I=\frac12\sum_{i,j=1}^{N_h}b_{ij}(z)d_{ij}(z_i-z_j)^2\ge0.
\end{equation}
Furthermore,
\begin{align*}
   II&=\sum_{i,j=M_h+1}^{N_h}b_{ij}(z)d_{ij}
             (u_D(\mathbf{x}_i)-u_D(\mathbf{x}_j))u_D(\mathbf{x}_i)
   +\sum_{i=M_h+1}^{N_h}\sum_{j=1}^{M_h}b_{ij}(z)d_{ij}u_D(\mathbf{x}_i)^2\\
   &\hspace*{17mm}
   -\sum_{i=M_h+1}^{N_h}\sum_{j=1}^{M_h}b_{ij}(z)d_{ij}u_D(\mathbf{x}_i)v_j
   +2\sum_{i=1}^{M_h}\sum_{j\in\Nis}d_{ij}(b_{ij}\alpha_{ij})(z)P_{ij}v_i
\end{align*}
and hence there are positive constants $C_2$ and $C_3$ independent of $v$ such 
that
\begin{equation}\label{eq:est3}
   |II|\le C_2\|v\|+C_3.
\end{equation}
Combining \eqref{eq:est1}--\eqref{eq:est3} and applying the Cauchy--Schwarz
inequality, one obtains
\begin{equation*}
   (Tv,v)\ge C_1\|v\|^2-C_2\|v\|-C_3-\|g\|\|v\|\qquad
   \forall\,\,v\in{\mathbb R}^{M_h}
\end{equation*}
with $g:=(g_1,\dots,g_{M_h})^\top$. Thus, for any
$K>\max\{1,(C_2+C_3+\|g\|)/C_1\}$, one has $(Tv,v)>0$ for all $v\in{\mathbb
R}^{M_h}$ with $\|v\|=K$, and the assertion of the theorem is true by Lemma~\ref{lem:Temam}.
\end{proof}

The following result will be useful for proving local and global DMPs.

\begin{lemma}\label{lemmma1}
For any vector $(u_1,\dots,u_{N_h})^\top\in{\mathbb R}^{N_h}$ and any pair
of indices $i\in\{1,\dots,M_h\}$, $j\in\Nis$, the following estimates hold:
\begin{equation}\label{eq:bounds}
   2d_{ij}(u_i-\bar u_i^{\max})
   \le(a_{ij}^C+a_{ij}^R)(u_j-u_i)-2d_{ij}\frac{b_i}{a_i^C}
   +f_{ij}^s-f_{ij}^{s,*}\le2d_{ij}(u_i-\bar u_i^{\min}).
\end{equation}
\end{lemma}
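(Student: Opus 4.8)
The plan is to show that the entire middle expression in \eqref{eq:bounds} collapses to the single quantity $2d_{ij}(u_i-\bar u_{ij}^{s,*})$, after which the claimed two-sided bound is nothing more than a restatement of the limiter constraints \eqref{eq:constraints}. First I would substitute the representation \eqref{eq:fijq2} of $f_{ij}^s$ into the middle term. Writing
\[
E:=(a_{ij}^C+a_{ij}^R)(u_j-u_i)-2d_{ij}\frac{b_i}{a_i^C}+f_{ij}^s-f_{ij}^{s,*},
\]
this substitution introduces the three contributions $2d_{ij}(u_i-\bar u_{ij}^s)$, $(a_{ij}^C+a_{ij}^R)(u_i-u_j)$, and $+2d_{ij}b_i/a_i^C$. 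The convective-reactive contribution cancels against the $(a_{ij}^C+a_{ij}^R)(u_j-u_i)$ already present in $E$, and the two source contributions $\pm 2d_{ij}b_i/a_i^C$ cancel as well, leaving the clean identity $E=2d_{ij}(u_i-\bar u_{ij}^s)-f_{ij}^{s,*}$.

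Next I would eliminate $f_{ij}^{s,*}$ by means of the definition $\bar u_{ij}^{s,*}=\bar u_{ij}^s+f_{ij}^{s,*}/(2d_{ij})$, equivalently $f_{ij}^{s,*}=2d_{ij}(\bar u_{ij}^{s,*}-\bar u_{ij}^s)$. Inserting this into the previous identity and simplifying gives $E=2d_{ij}(u_i-\bar u_{ij}^{s,*})$. Consequently, the whole chain \eqref{eq:bounds} is equivalent to
\[
2d_{ij}(u_i-\bar u_i^{\max})\le 2d_{ij}(u_i-\bar u_{ij}^{s,*})\le 2d_{ij}(u_i-\bar u_i^{\min}).
\]

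Finally, since $d_{ij}\ge\delta h^{d-1}>0$ for $j\in\Nis$ by the definition of the graph-Laplacian entries, I may divide through by $2d_{ij}$ without reversing the inequalities; the displayed chain then becomes $\bar u_i^{\min}\le\bar u_{ij}^{s,*}\le\bar u_i^{\max}$, which is exactly \eqref{eq:constraints}. As noted after the definitions \eqref{eq:limited_flux} and \eqref{eq:limited_flux2}, these constraints hold both in the interior--interior case and in the interior--boundary case, so the lemma follows in full generality. I do not anticipate a genuine obstacle here: the argument is purely algebraic, resting on \eqref{eq:fijq2}, the definition of $\bar u_{ij}^{s,*}$, and the limiter-induced bounds \eqref{eq:constraints}. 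The only points requiring care are the bookkeeping of the cancellations in the first step and the strict positivity of $d_{ij}$, which is what licenses clearing the denominator while preserving the direction of the inequalities.
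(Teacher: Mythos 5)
Your proposal is correct and follows essentially the same route as the paper: both start by substituting \eqref{eq:fijq2} to reduce the middle expression to $2d_{ij}(u_i-\bar u_{ij}^s)-f_{ij}^{s,*}$, and then invoke the limiter-induced bounds. The only cosmetic difference is that you go one step further and rewrite this as $2d_{ij}(u_i-\bar u_{ij}^{s,*})$ so as to cite \eqref{eq:constraints} directly, whereas the paper re-derives the needed half of \eqref{eq:constraints} inline via a case distinction on the sign of $f_{ij}^s$ — the logical content is identical.
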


\begin{proof} Denote
\begin{equation*}
   q_{ij}:=(a_{ij}^C+a_{ij}^R)(u_j-u_i)-2d_{ij}\frac{b_i}{a_i^C}
   +f_{ij}^s-f_{ij}^{s,*}.
\end{equation*}
Using \eqref{eq:fijq2}, one obtains
\begin{equation*}
   q_{ij}=2d_{ij}(u_i-\bar u_{ij}^s)-f_{ij}^{s,*}.
\end{equation*}
If $f_{ij}^s>0$, then, according to \eqref{eq:limited_flux} and
\eqref{eq:limited_flux2}, one has
\begin{equation*}
   f_{ij}^{s,*}\le 2d_{ij}(\bar u_i^{\max}-\bar{u}_{ij}^s)
\end{equation*}
and hence
\begin{equation*}
   q_{ij}\ge2d_{ij}(u_i-\bar u_i^{\max}).
\end{equation*}
If $f_{ij}^s\le0$, then $f_{ij}^{s,*}\le0$ and hence
\begin{equation*}
   q_{ij}\ge2d_{ij}(u_i-\bar u_{ij}^s)\ge2d_{ij}(u_i-\bar u_i^{\max}).
\end{equation*}
This proves the first inequality in \eqref{eq:bounds}. The second one follows analogously.
\end{proof}

\begin{theorem}
Let $\varepsilon>0$. Then the solution of \eqref{wmc} satisfies the following
local DMPs for any $i\in\{1,\dots,M_h\}$:
\begin{subequations}\label{ldmp1}
\begin{align}\label{ldmp_max1}
 & b_i\le 0\quad\Rightarrow\quad u_i\le \max_{j\in\Nis}u_j^+,\\
 & b_i\ge 0\quad\Rightarrow\quad u_i\ge \min_{j\in\Nis}u_j^-,
 \label{ldmp_min1}
\end{align}
\end{subequations}
where $u_j^+=\max\{u_j,0\}$ and $u_j^-=\min\{u_j,0\}$. If $a_i^R=0$, then the
following stronger local DMPs hold:
\begin{subequations}\label{ldmp2}
\begin{align}\label{ldmp_max2}
 & b_i\le 0\quad\Rightarrow\quad u_i\le \max_{j\in\Nis}u_j,\\
 & b_i\ge 0\quad\Rightarrow\quad u_i\ge \min_{j\in\Nis}u_j.
 \label{ldmp_min2}
\end{align}
\end{subequations}
\end{theorem}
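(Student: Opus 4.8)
The plan is to collapse both families of discrete maximum principles to a single ``master estimate'' obtained by summing the bounds of Lemma~\ref{lemmma1} over the stencil and inserting the reformulation \eqref{wmc2}, and then to close each case by a one-line sign argument in which the standing hypothesis $\varepsilon>0$ is used exactly once, through the strict positivity of the diagonal diffusion entry $a_{ii}^D$.

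First I would sum the left inequality of \eqref{eq:bounds} over $j\in\Nis$. Since $\sum_{j\in\Nis}2d_{ij}=a_i^C$, the left-hand side becomes $a_i^C(u_i-\bar u_i^{\max})$, while on the right the relation $\sum_{j\in\Nis}2d_{ij}\frac{b_i}{a_i^C}=b_i$ cancels the source contribution, and \eqref{wmc2} lets me rewrite the remaining convective/reactive/flux sum as $b_i-a_i^Ru_i-\sum_{j\in\Nis}a_{ij}^D(u_j-u_i)$. After rearranging and expanding the diffusion term as $-\sum_{j\in\Nis}a_{ij}^D(u_j-u_i)=-\sum_{j\in\Nis}a_{ij}^Du_j+u_i\sum_{j\in\Nis}a_{ij}^D$, I arrive at
\begin{equation*}
   \Big(a_i^R+a_i^C-\textstyle\sum_{j\in\Nis}a_{ij}^D\Big)u_i
   \le a_i^C\,\bar u_i^{\max}-\sum_{j\in\Nis}a_{ij}^Du_j .
\end{equation*}
The crucial observation is that, by the partition of unity $\sum_j\varphi_j\equiv1$, one has $\sum_{j=1}^{N_h}a_{ij}^D=0$, so that $-\sum_{j\in\Nis}a_{ij}^D=a_{ii}^D=\varepsilon\int_\Omega|\nabla\varphi_i|^2\dx>0$; this is the single place where $\varepsilon>0$ enters. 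Since $a_{ij}^D\le0$ for $j\ne i$, the last sum is bounded by $\big({-}\sum_{j\in\Nis}a_{ij}^D\big)\max_{j\in\Nis}u_j=a_{ii}^D\max_{j\in\Nis}u_j$. Writing $a_i=a_i^R+a_i^C+a_{ii}^D$, the master estimate reads $a_iu_i\le a_i^C\bar u_i^{\max}+a_{ii}^D\max_{j\in\Nis}u_j$.

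Next I would prove \eqref{ldmp_max1} by contradiction, assuming $u_i>\max_{j\in\Nis}u_j^+$. This simultaneously forces $u_i>0$ and $u_i>u_j$ for all $j\in\Nis$, hence $u_i=u_i^{\max}$. The limiter property \eqref{eq:dmpbar1}, applicable because $b_i\le0$ and $u_i=u_i^{\max}$, gives $\bar u_{ij}^s\le\max\{u_i,u_j\}=u_i$ for every $j$, so $\bar u_i^{\max}\le u_i$. Substituting into the master estimate and cancelling $a_i^Cu_i$ yields $a_i^Ru_i+a_{ii}^D(u_i-\max_{j\in\Nis}u_j)\le0$, which is impossible since $a_i^Ru_i\ge0$ while $a_{ii}^D>0$ and $u_i-\max_{j\in\Nis}u_j>0$ make the second term strictly positive. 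When $a_i^R=0$ the reaction term is absent, so the weaker hypothesis $u_i>\max_{j\in\Nis}u_j$ already produces $a_{ii}^D(u_i-\max_{j\in\Nis}u_j)\le0$ with a strictly positive left-hand side, proving the sharper \eqref{ldmp_max2}. The minimum principles \eqref{ldmp_min1} and \eqref{ldmp_min2} follow by the mirror-image argument, using the right inequality of \eqref{eq:bounds}, the implication \eqref{eq:dmpbar2}, and $u_j^-=\min\{u_j,0\}$.

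I expect the only genuinely delicate step to be the assembly and sign-accounting of the master estimate, in particular the identity $-\sum_{j\in\Nis}a_{ij}^D=a_{ii}^D>0$: it is what makes the reaction/diffusion bookkeeping close and is the sole point at which $\varepsilon>0$ is invoked. The same computation also explains \emph{why} the positive part $u_j^+$ is needed in \eqref{ldmp_max1}: the reaction contribution $a_i^Ru_i$ carries the useful sign only when $u_i>0$, which the hypothesis $u_i>\max_{j\in\Nis}u_j^+\ge0$ guarantees, whereas it can be dropped precisely when $a_i^R=0$.
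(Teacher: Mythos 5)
Your proposal is correct and follows essentially the same route as the paper's proof: both sum the bounds of Lemma~\ref{lemmma1} over $j\in\Nis$, insert the reformulation \eqref{wmc2}, invoke \eqref{eq:dmpbar1} under the contradiction hypothesis $u_i>u_j$ for all $j\in\Nis$ (with the preliminary reduction to $u_i>0$ when $a_i^R\neq0$), and close via the sign and zero-row-sum properties of $A^D$, where $\varepsilon>0$ enters only through $a_{ii}^D=-\sum_{j\in\Nis}a_{ij}^D>0$. Packaging these steps into a single ``master estimate'' before specializing is a purely presentational difference from the paper's chain of inequalities.
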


\begin{proof}
Consider any $i\in\{1,\dots,M_h\}$ such that $b_i\le0$. If $a_i^R\neq0$, it
suffices to assume that $u_i>0$ since otherwise \eqref{ldmp_max1} holds trivially. Hence $a_i^Ru_i\ge0$ since $a_i^R\ge0$. Since the solution of
\eqref{wmc} satisfies \eqref{wmc2}, it follows from \eqref{eq:bounds} that
\begin{equation}\label{eq1}
   b_i\ge\sum_{j\in\Nis}\left\{a_{ij}^D(u_j-u_i)+
   2d_{ij}(u_i-\bar u_i^{\max})+2d_{ij}\frac{b_i}{a_i^C}\right\}.
\end{equation}
Let us assume that $u_i>u_j$ for all $j\in\Nis$. Then 
$u_i\ge\bar u_i^{\max}$ due to \eqref{eq:dmpbar1} and it follows from
\eqref{eq1} that
\begin{equation}\label{eq2}
   b_i\ge b_i+\sum_{j\in\Nis}a_{ij}^D(u_j-u_i).
\end{equation}
Since $a_{ij}^D\le0$ for any $j\in\Nis$, $a_{ii}^D>0$, and
$\sum_{j\in\Ni}a_{ij}^D=0$, there exists $j\in\Nis$ such that $a_{ij}^D<0$.
Therefore,
\begin{equation*}
   \sum_{j\in\Nis}a_{ij}^D(u_j-u_i)>0,
\end{equation*}
which is in contradiction to \eqref{eq2}. Consequently, there exists $j\in\Nis$
such that $u_i\le u_j$, thus proving \eqref{ldmp_max2} and hence also
\eqref{ldmp_max1}.

The implications \eqref{ldmp_min1} and \eqref{ldmp_min2} follow analogously.
\end{proof}

To prove global DMPs, we assume that the mesh is such that, for any 
$i\in\{1,\dots,M_h\}$, there exist $k\in\{M_h+1,\dots,N_h\}$ and
$i_1,i_2,\dots,i_l\in\{1,\dots,M_h\}$ such that all these indices are mutually
different and 
\begin{equation}\label{eq:connect}
   a_{ii_1}^D\ne0,\quad a_{i_1i_2}^D\ne0,\quad\dots\quad 
   a_{i_{l-1}i_l}^D\ne0,\quad a_{i_lk}^D\ne0.
\end{equation}
This assumption is typically satisfied.

\begin{theorem}
Let $\varepsilon>0$. Then the solution of \eqref{wmc} satisfies the following
global DMPs:
\begin{subequations}\label{gdmp1}
\begin{align}\label{gdmp_max1}
 &b_i\le0,\,\,i=1,\dots,M_h\,\,\,\Rightarrow\,\,\,
   \max_{i=1,\dots,N_h}\,u_i\le\max_{i=M_h+1,\dots,N_h}\,u_i^+\,,\\
 &b_i\ge0,\,\,i=1,\dots,M_h\,\,\,\Rightarrow\,\,\,
   \min_{i=1,\dots,N_h}\,u_i\ge\min_{i=M_h+1,\dots,N_h}\,u_i^-\,.
 \label{gdmp_min1}
\end{align}
\end{subequations}
If $c=0$ in $\Omega$, then the following stronger global DMPs hold:
\begin{subequations}\label{gdmp2}
\begin{align}\label{gdmp_max2}
 &b_i\le0,\,\,i=1,\dots,M_h\,\,\,\Rightarrow\,\,\, 
   \max_{i=1,\dots,N_h}\,u_i=\max_{i=M_h+1,\dots,N_h}\,u_i\,,\\
 &b_i\ge0,\,\,i=1,\dots,M_h\,\,\,\Rightarrow\,\,\,
   \min_{i=1,\dots,N_h}\,u_i=\min_{i=M_h+1,\dots,N_h}\,u_i\,.
 \label{gdmp_min2}
\end{align}
\end{subequations}
\end{theorem}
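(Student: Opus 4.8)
The plan is to run a ``propagation of extrema'' argument that combines the pointwise bounds of Lemma~\ref{lemmma1} with the connectivity hypothesis \eqref{eq:connect}, in the same spirit as the preceding local-DMP proof. First I would recast \eqref{wmc2} in reduced form: writing $q_{ij}$ as in the proof of Lemma~\ref{lemmma1} and using $a_i^C=\sum_{j\in\Nis}2d_{ij}$ to cancel the explicit source term, \eqref{wmc2} becomes
\begin{equation*}
   a_i^Ru_i+\sum_{j\in\Nis}a_{ij}^D(u_j-u_i)+\sum_{j\in\Nis}q_{ij}=0,
   \qquad i=1,\dots,M_h.
\end{equation*}

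The core step I would then isolate is a propagation claim: if $b_i\le0$, if $u_i$ equals the global maximum $M:=\max_{1\le k\le N_h}u_k$, and if moreover $a_i^Ru_i\ge0$, then $u_j=M$ for every $j\in\Nis$ with $a_{ij}^D\neq0$. To prove it I would note that $u_i=M$ makes $u_i$ a local maximum, so $u_i=u_i^{\max}$ and \eqref{eq:dmpbar1} yields $\bar u_{ij}^s\le u_i$ for all $j$, i.e.\ $u_i-\bar u_i^{\max}\ge0$; the first inequality of \eqref{eq:bounds} then gives $q_{ij}\ge2d_{ij}(u_i-\bar u_i^{\max})\ge0$. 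Since also $a_{ij}^D(u_j-u_i)\ge0$ (because $a_{ij}^D\le0$ and $u_j\le u_i$) and $a_i^Ru_i\ge0$ by hypothesis, the reduced equation exhibits $0$ as a sum of nonnegative terms, forcing each to vanish; in particular $a_{ij}^D(u_j-u_i)=0$, whence $u_j=M$ whenever $a_{ij}^D\neq0$.

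With this claim in hand, the global DMPs follow by walking along the chains of \eqref{eq:connect}. For \eqref{gdmp_max1} I would argue by contradiction: assuming $M>\max_{i>M_h}u_i^+\ge0$ forces $M>0$ and excludes all boundary nodes from attaining $M$ (there $u_i\le u_i^+<M$), so $M$ is attained at some interior $i_0$; since $u_{i_0}=M>0$ gives $a_{i_0}^Ru_{i_0}\ge0$, the propagation claim applies along the path $i_0,i_1,\dots,i_l,k$ of \eqref{eq:connect}, whose intermediate nodes are interior and are successively shown to equal $M>0$ (so the claim re-applies at each step) and whose endpoint $k$ is a boundary node, yielding $u_k=M$ --- a contradiction. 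The stronger statement \eqref{gdmp_max2} is obtained the same way once $c\equiv0$ makes $a_i^R=0$: then $a_i^Ru_i\ge0$ holds with no sign restriction on $M$, so the propagation is unconditional and the maximum, if attained in the interior, is also attained on the boundary, giving equality with $\max_{i>M_h}u_i$. The minimum statements \eqref{gdmp_min1} and \eqref{gdmp_min2} are entirely analogous: one uses the second inequality of \eqref{eq:bounds}, \eqref{eq:dmpbar2}, and the observation that $u_i=m<0$ makes $a_i^Ru_i\le0$ (again automatic when $a_i^R=0$).

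The main obstacle is the interface between the two ingredients. The local reasoning only tells me that an extremum spreads to $a^D$-connected neighbors, while the connectivity assumption merely furnishes a path to the boundary; the argument works precisely because that path lies in the interior up to its last node, so the propagation claim can be chained node by node until the boundary value is pinned. The secondary point requiring care is the sign bookkeeping for the reaction term: $a_i^Ru_i$ is controllable only for $u_i>0$ (resp.\ $u_i<0$), which is exactly why $u_i^+$ (resp.\ $u_i^-$) appears in \eqref{gdmp1}, and why the hypothesis $c\equiv0$ removes this restriction and upgrades the bounds to the equalities in \eqref{gdmp2}.
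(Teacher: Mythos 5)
Your proposal is correct and follows essentially the same route as the paper's proof: both rest on combining Lemma~\ref{lemmma1} with property \eqref{eq:dmpbar1} to show that at an interior global maximum with $a_i^Ru_i\ge0$ all terms $a_{ij}^D(u_j-u_i)$ must vanish, and then propagating the extremum to the boundary along the chain \eqref{eq:connect}. The only cosmetic difference is that you phrase the local step as ``a sum of nonnegative terms equals zero'' in the reduced equation, whereas the paper derives the equivalent inequality \eqref{eq1} and cancels $b_i$.
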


\begin{proof}
Let us assume that $b_i\le0$ for all $i=1,\dots,M_h$. If $c$ does not vanish in
$\Omega$, it suffices to assume that $\max_{i=1,\dots,N_h}\,u_i>0$ since
otherwise \eqref{gdmp_max1} holds trivially. In this case, the right-hand side
of the implication \eqref{gdmp_max1} reduces to the right-hand side of the
implication \eqref{gdmp_max2}.

Let $i\in\{1,\dots,N_h\}$ be an arbitrary index such that
\begin{equation}\label{eq:ui_max}
   u_i=\max_{j=1,\dots,N_h}\,u_j\,.
\end{equation}
If $i\in\{M_h+1,\dots,N_h\}$, then the right-hand side equality of the implication
\eqref{gdmp_max2} holds. Thus, let us assume that $i\in\{1,\dots,M_h\}$. Since
$a_i^Ru_i\ge0$, the inequality \eqref{eq1} holds again. Using
\eqref{eq:dmpbar1}, one obtains $u_i\ge\bar u_i^{\max}$ and hence it follows
from \eqref{eq1} that
\begin{equation*}
   0\ge\sum_{j\in\Nis}a_{ij}^D(u_j-u_i).
\end{equation*}
Since $a_{ij}^D\le0$ for any $j\in\Nis$ and $u_i$ is a global maximum, all
terms in the sum are nonnegative, which implies that
\begin{equation*}
   a_{ij}^D(u_j-u_i)=0\quad\forall\,\,j\in\Nis.
\end{equation*}
Let $k\in\{M_h+1,\dots,N_h\}$ and $i_1,i_2,\dots,i_l\in\{1,\dots,M_h\}$ be such
that \eqref{eq:connect} holds. Then $u_{i_1}=u_i$ and hence \eqref{eq:ui_max}
holds with $i=i_1$.  Repeating the above arguments, one finally concludes that
\eqref{eq:ui_max} holds with $i=k\in\{M_h+1,\dots,N_h\}$, which proves that the
right-hand side equality of the implication \eqref{gdmp_max2} holds.

The proof of \eqref{gdmp_min1} and \eqref{gdmp_min2} is analogous.
\end{proof}

The global DMPs imply that our well-balanced MCL scheme is positivity
preserving.

\begin{corollary}
  Let $\varepsilon>0$. Consider a finite element approximation $u_h$
 of the form \eqref{uhdef}. Suppose that its coefficients satisfy \eqref{wmc}. Then
\begin{equation*}
   f\ge0\ \mbox{in}\ \Omega\quad\mbox{and}\quad u_D\ge0\ \mbox{on}\ \Gamma_D
   \quad\Rightarrow\quad u_h\ge0\ \mbox{in}\ \Omega.
\end{equation*}
\end{corollary}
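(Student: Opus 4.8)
The plan is to derive positivity preservation directly from the global discrete minimum principle \eqref{gdmp_min1}, which I may invoke since $\varepsilon>0$. The first observation is that, for continuous piecewise-linear elements, the Lagrange basis functions satisfy $\varphi_j\ge0$ and $\sum_{j=1}^{N_h}\varphi_j\equiv1$ on $\overline\Omega$. Hence, at every point, the value $u_h(\mathbf{x})=\sum_j u_j\varphi_j(\mathbf{x})$ is a convex combination of the nodal values, so that $\min_{j}u_j\le u_h(\mathbf{x})\le\max_j u_j$. Consequently, it suffices to show that $\min_{i=1,\dots,N_h}u_i\ge0$; nonnegativity of $u_h$ in $\Omega$ then follows immediately.

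Next, I would translate the hypotheses into sign conditions on the data entering \eqref{wmc}. From $f\ge0$ in $\Omega$ and $\varphi_i\ge0$, I obtain $b_i=\int_\Omega\varphi_i f\,\dx\ge0$ for every interior index $i=1,\dots,M_h$, which is exactly the assumption needed to apply \eqref{gdmp_min1}. From $u_D\ge0$ on $\Gamma_D$ and the boundary condition \eqref{eq:wmc_bc}, I get $u_i=u_D(\mathbf{x}_i)\ge0$ for $i=M_h+1,\dots,N_h$, whence $u_i^-=\min\{u_i,0\}=0$ for all Dirichlet indices and therefore $\min_{i=M_h+1,\dots,N_h}u_i^-=0$.

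It then remains to invoke \eqref{gdmp_min1}: since $b_i\ge0$ for all $i=1,\dots,M_h$, it yields $\min_{i=1,\dots,N_h}u_i\ge\min_{i=M_h+1,\dots,N_h}u_i^-=0$. Thus, every nodal value is nonnegative, and by the convex-combination property established in the first step, $u_h\ge0$ throughout $\Omega$.

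Since the heavy lifting has already been done in the global DMP theorem, I do not expect any real obstacle here; the argument is essentially a bookkeeping exercise. The one point that requires care—and the only place where the $\mathbb{P}_1$ structure is genuinely used—is the reduction from pointwise nonnegativity of $u_h$ to nonnegativity of the nodal values, which hinges on the partition-of-unity and nonnegativity properties of the Lagrange basis. Note that if the stronger principle \eqref{gdmp_min2} were used instead (valid when $c\equiv0$), one would reach the same conclusion, so the corollary also covers the reaction-free case without modification.
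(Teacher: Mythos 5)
Your argument is correct and follows essentially the same route as the paper: derive $b_i\ge0$ from $f\ge0$, apply the global minimum principle \eqref{gdmp_min1} together with $u_i^-=0$ at Dirichlet nodes, and pass from nodal to pointwise nonnegativity (the paper phrases this last step as the minimum of a piecewise-linear function being attained at a vertex, which is equivalent to your convex-combination observation).
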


\begin{proof}
If $f\ge0$ in $\Omega$, then $b_i\ge0$ for $i=1,\dots,M_h$ and hence it follows
from \eqref{gdmp_min1} that the solution of \eqref{wmc} satisfies
\begin{equation*}
   \min_{i=1,\dots,N_h}\,u_i\ge\min_{i=M_h+1,\dots,N_h}\,u_i^-
   =\min_{i=M_h+1,\dots,N_h}\,\min\{u_D(\mathbf{x}_i),0\}.
\end{equation*}
Thus, if $u_D\ge0$ on $\Gamma_D$, one has $u_i\ge0$ for $i=1,\dots,N_h$. Since
the minimum of a continuous  function that is piecewise linear
on $\mathcal{T}_h$ is
attained at a vertex of $\mathcal{T}_h$, it follows that $u_h\ge0$ in $\Omega$.
\end{proof}

\section{Numerical examples}\label{sec:numres}

In this section, we perform numerical studies for two-dimensional test problems. In our discussion of the results, the label MC is used for the monolithic convex limiter presented in Sec.~\ref{subsec:convective}. The label WMC refers to the well-balanced generalization of MC, as presented in Sec.~\ref{sec:well_balanced}. All simulations were performed using a \textsc{ParMooN} \cite{WB16} implementation of the methods under investigation. 

The square domain $\Omega=(0,1)^2$  is used in all of
 our numerical experiments. Uniform refinement of the coarse (level 0) triangulations shown in Fig.~\ref{fig:grids} yields two families of computational meshes. We use the label \textit{Grid~1} for meshes generated from the triangulation shown on the left and \textit{Grid~2} for refinements of the  triangulation shown on the right. The stopping criterion for fixed-point iterations uses the absolute tolerance $10^{-8}$ for the residual of the nonlinear discrete problem.

\begin{figure}[h!]\centering
\includegraphics[scale=0.15]{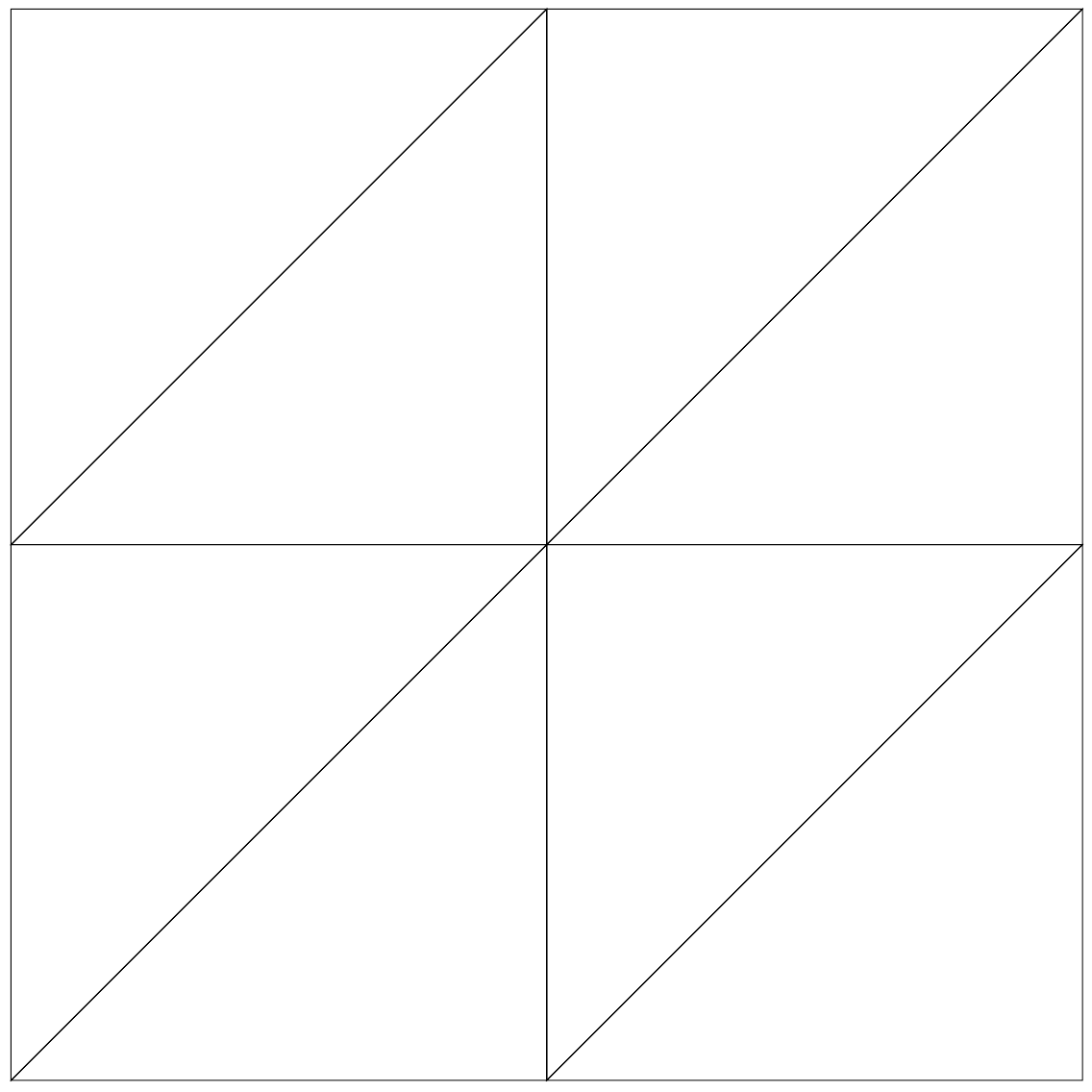}\hspace*{1em}
\includegraphics[scale=0.15]{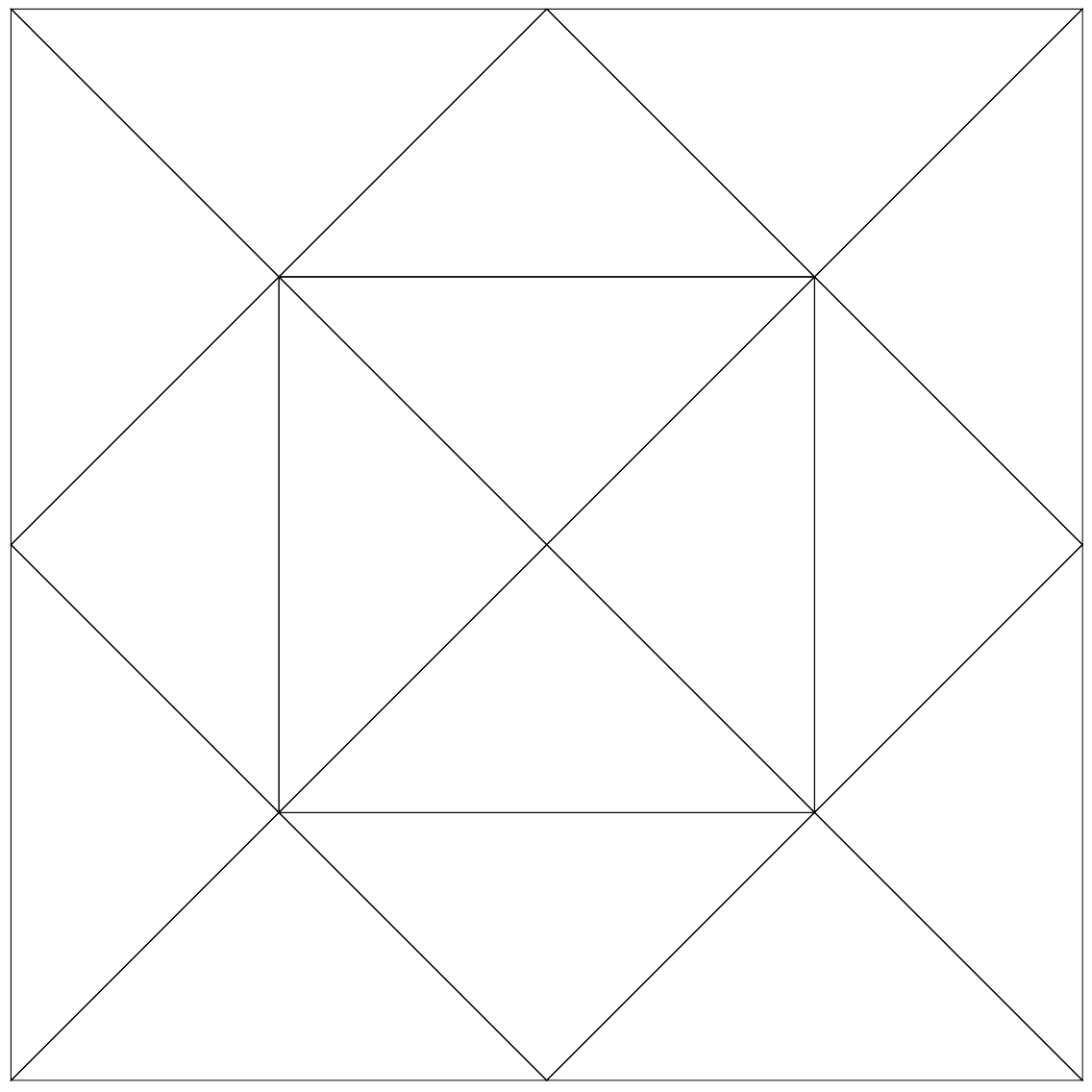}
\caption{Level 0 triangulations used for Grid 1 (left) and Grid 2 (right) families of computational meshes.}\label{fig:grids}
\end{figure}





\subsection{Interior layers}\label{ex:bal_reac}
In the first numerical example, we solve the CDR equation \eqref{eq:cdr_eqn} with $\mathbf{v}=(1,0)^\top$ and $\varepsilon=10^{-8}$. 
To demonstrate the need for a well-balanced treatment of source terms, we set
$$
f(x,y)=\begin{cases}
10 & \mbox{if}\ x\in[0.1,0.6],\ y\in[0.25,0.75],\\
 0 & \mbox{otherwise},
\end{cases}\qquad
 c(x,y)=\begin{cases}
25 & \mbox{if}\ x > 0.75,\\
0 & \mbox{otherwise}.
\end{cases}
 $$
Homogeneous Dirichlet boundary conditions are prescribed on $\Gamma_D=\Gamma$. The discontinuities in $f$ and $c$ produce sharp interior
layers. The exact solution of this new test problem is linear in the core of the subdomain $(0.1,0.6)\times (0.25,0.75)$ and constant
in the core of the subdomain $(0.6,0.75)\times (0.25,0.75)$. Note that the restriction of \eqref{eq:cdr_eqn} to the former subdomain is a CDR equation of
the form considered at the beginning of Sec.~\ref{sec:well_balanced}. 

\begin{figure}[h!]\centering
\includegraphics[width=0.3\textwidth]{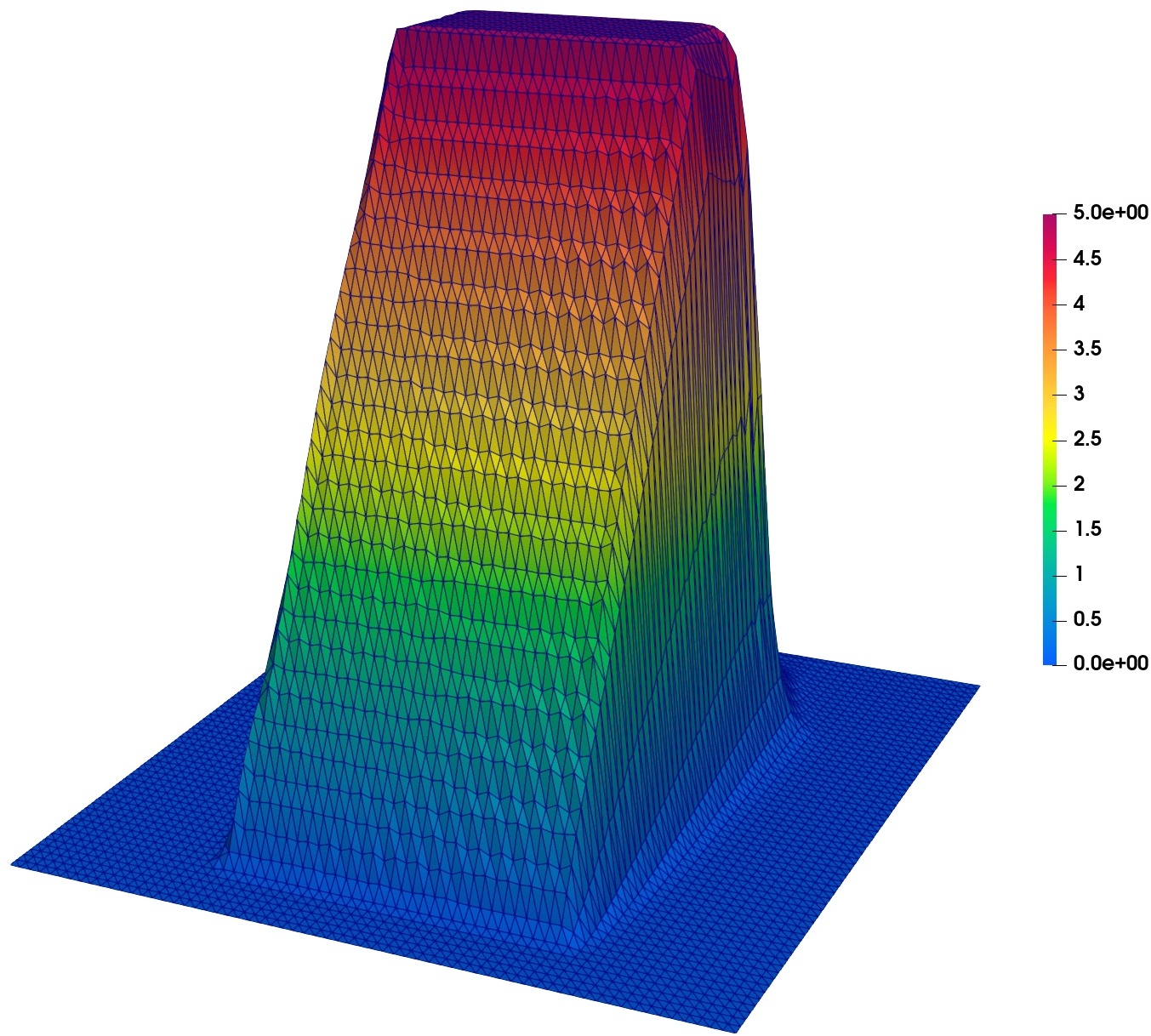}\hspace*{1em}
\includegraphics[width=0.3\textwidth]{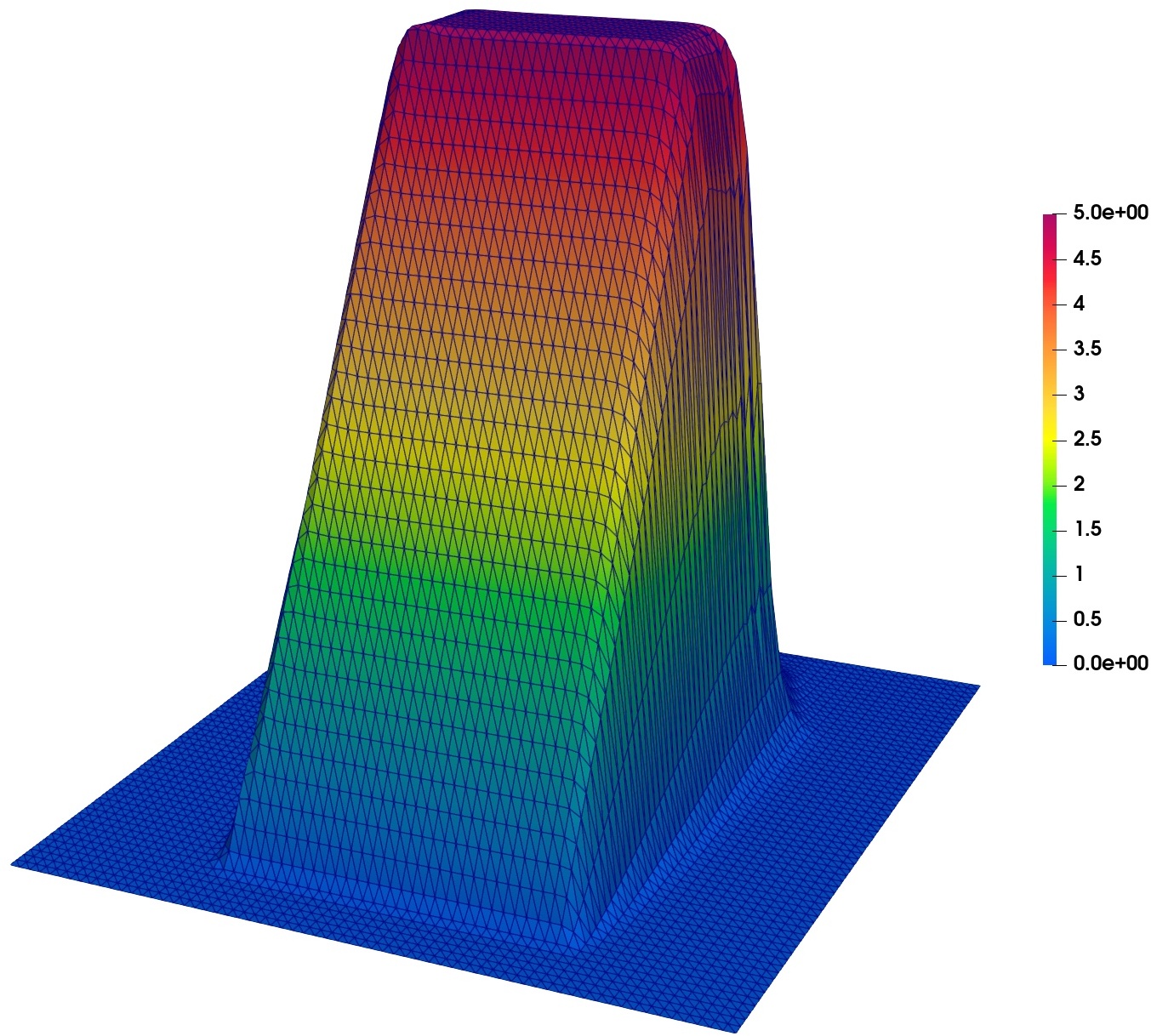}
\caption{Interior layers,  MC (left) and WMC (right) solutions, Grid 1 / level~5.}\label{fig:balanced_reaction_wmc_g1}
\vskip0.5cm

\includegraphics[width=0.3\textwidth]{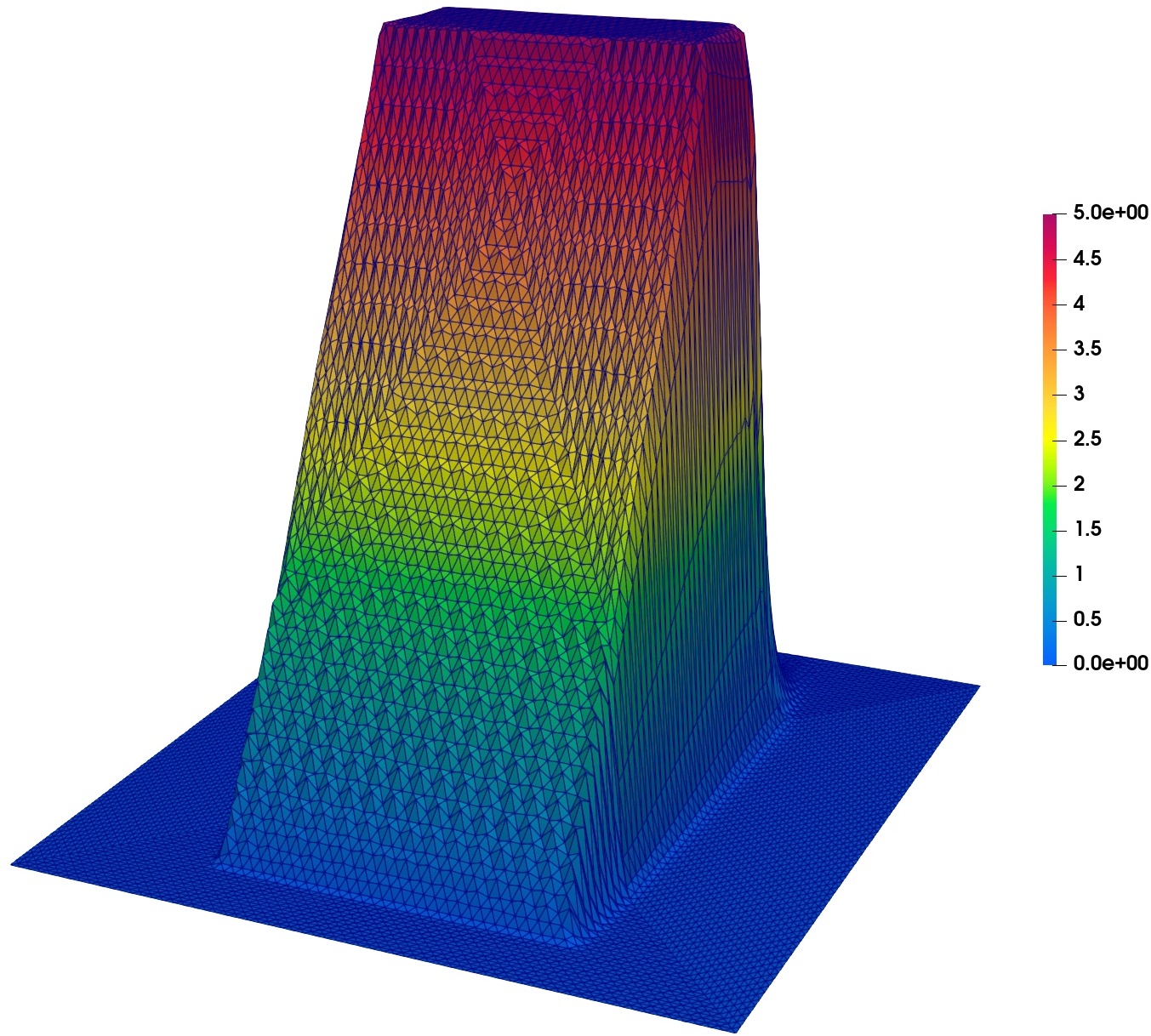}\hspace*{1em}
\includegraphics[width=0.3\textwidth]{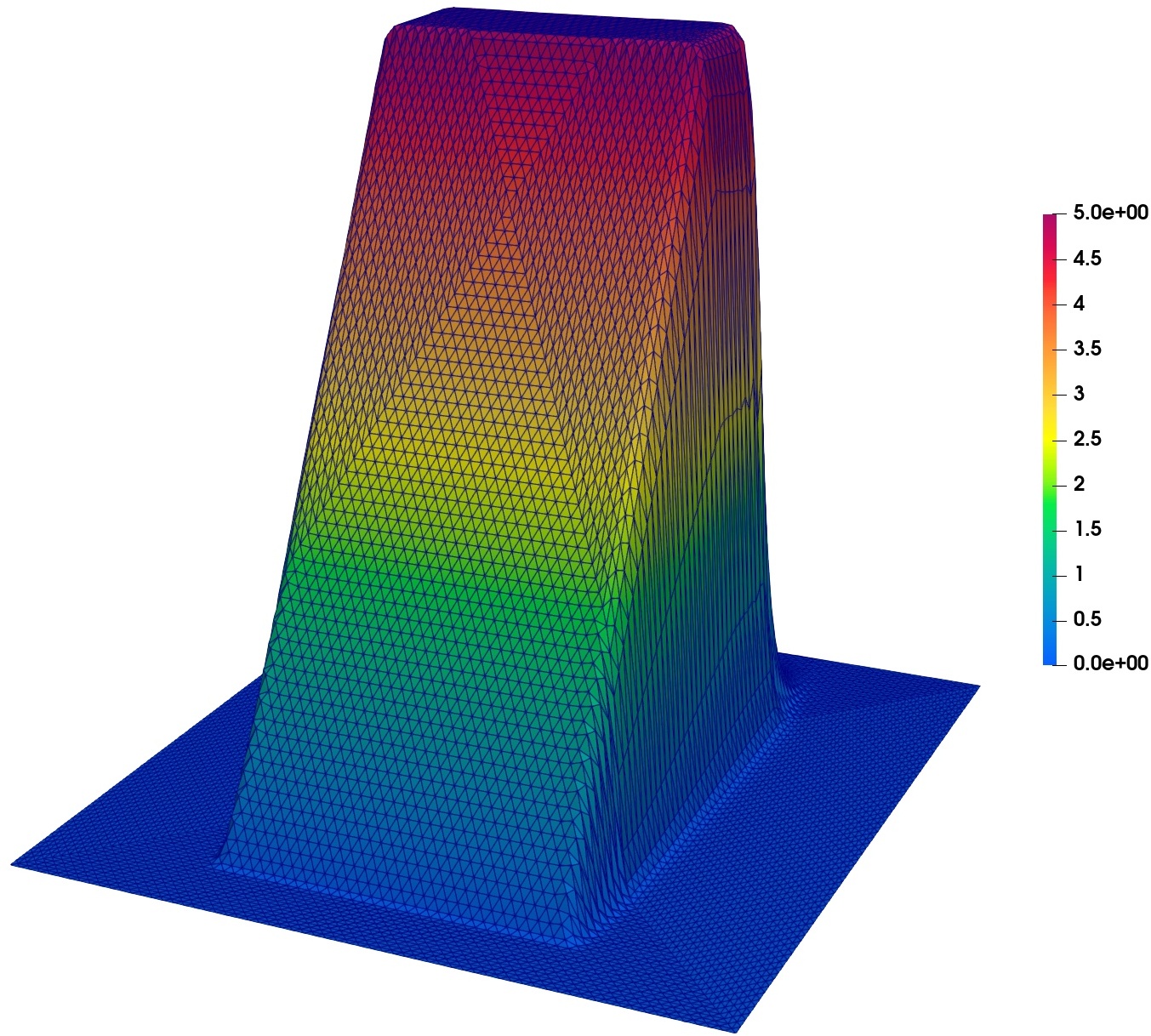}
\caption{Interior layers,  MC (left) and WMC (right) solutions, Grid 2 / level~5.}\label{fig:balanced_reaction_wmc_g2}
\end{figure}

The numerical solutions shown in Figs \ref{fig:balanced_reaction_wmc_g1} and \ref{fig:balanced_reaction_wmc_g2} were obtained on level 5 triangulations
of Grid~1 and Grid 2, respectively. The spurious ripples in the MC results are caused by the fact that the flux-corrected approximation to the
convective term is not in equilibrium with the standard Galerkin discretization of the source term. The WMC version is free of this drawback and
produces nonoscillatory results. Figure \ref{fig:balance_reaction_plot} shows the Grid 2 / level 7 approximation obtained with WMC.

\begin{figure}[h!]
\centering
\includegraphics[width=0.3\textwidth]{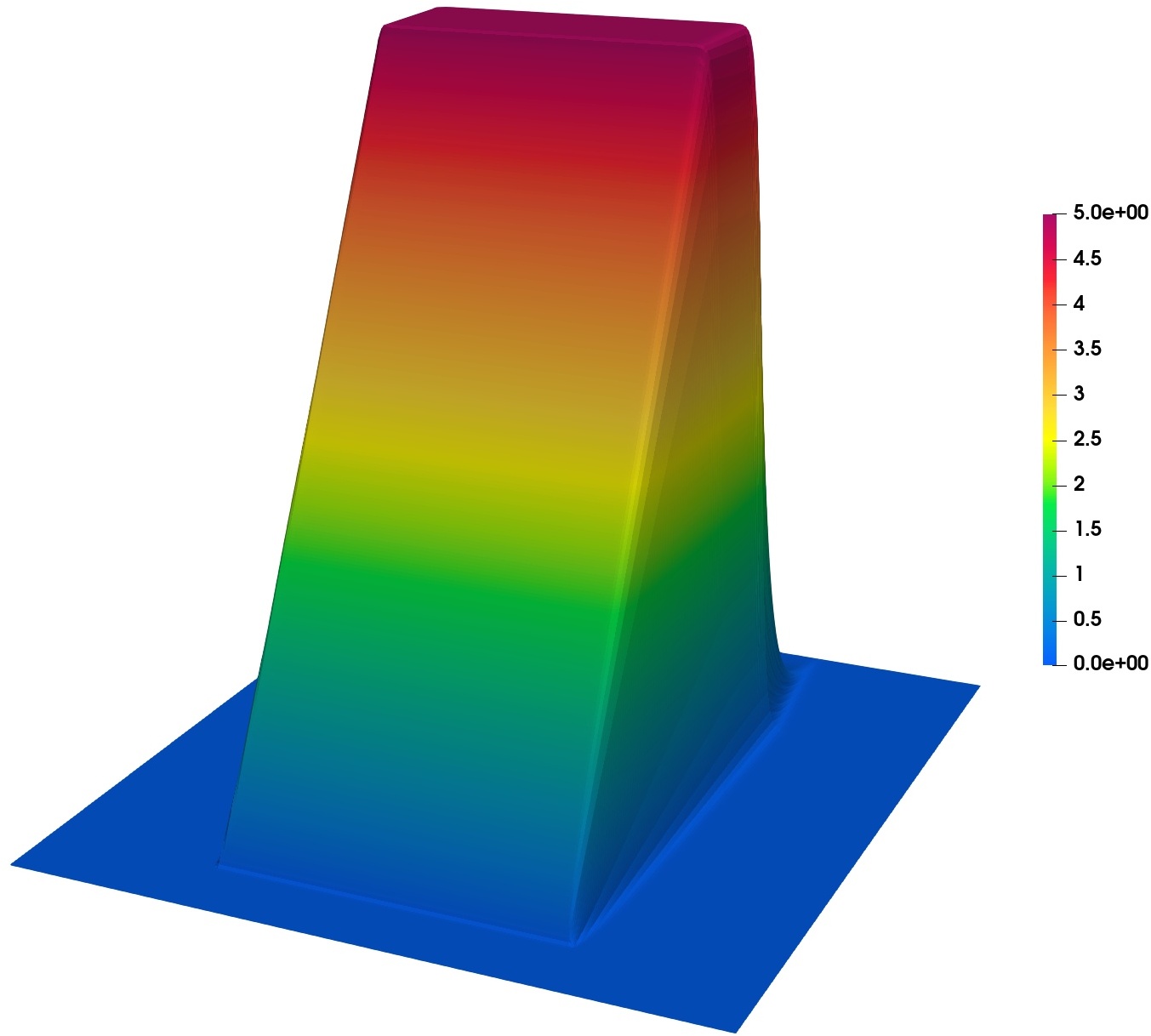}
\caption{Interior layers, WMC solution, Grid 2 / level~7.}\label{fig:balance_reaction_plot}
\end{figure}

%

\subsection{Boundary layers}\label{ex:boundary_layer}
The next test problem that we consider in this numerical study was introduced by John et al. in \cite[Example~3]{JMT96}.
The manufactured
exact solution
$$
u(x,y)=xy^2-y^2\exp\left(\frac{2(x-1)}{\varepsilon}\right)-x\exp\left(\frac{3(y-1)}{\varepsilon}\right)+\exp\left(\frac{2(x-1)+3(y-1)}{\varepsilon}\right)
$$
of the CDR equation \eqref{eq:cdr_eqn} with $\mathbf{v}=(2,3)^\top$ and $c\equiv 0$ is used to define the right-hand side $f$ and
the Dirichlet boundary data $u_D$. The exact solution has boundary layers at $x=1$ and $y=1$.

\begin{figure}[h!]\centering

\includegraphics[width=0.3\textwidth]{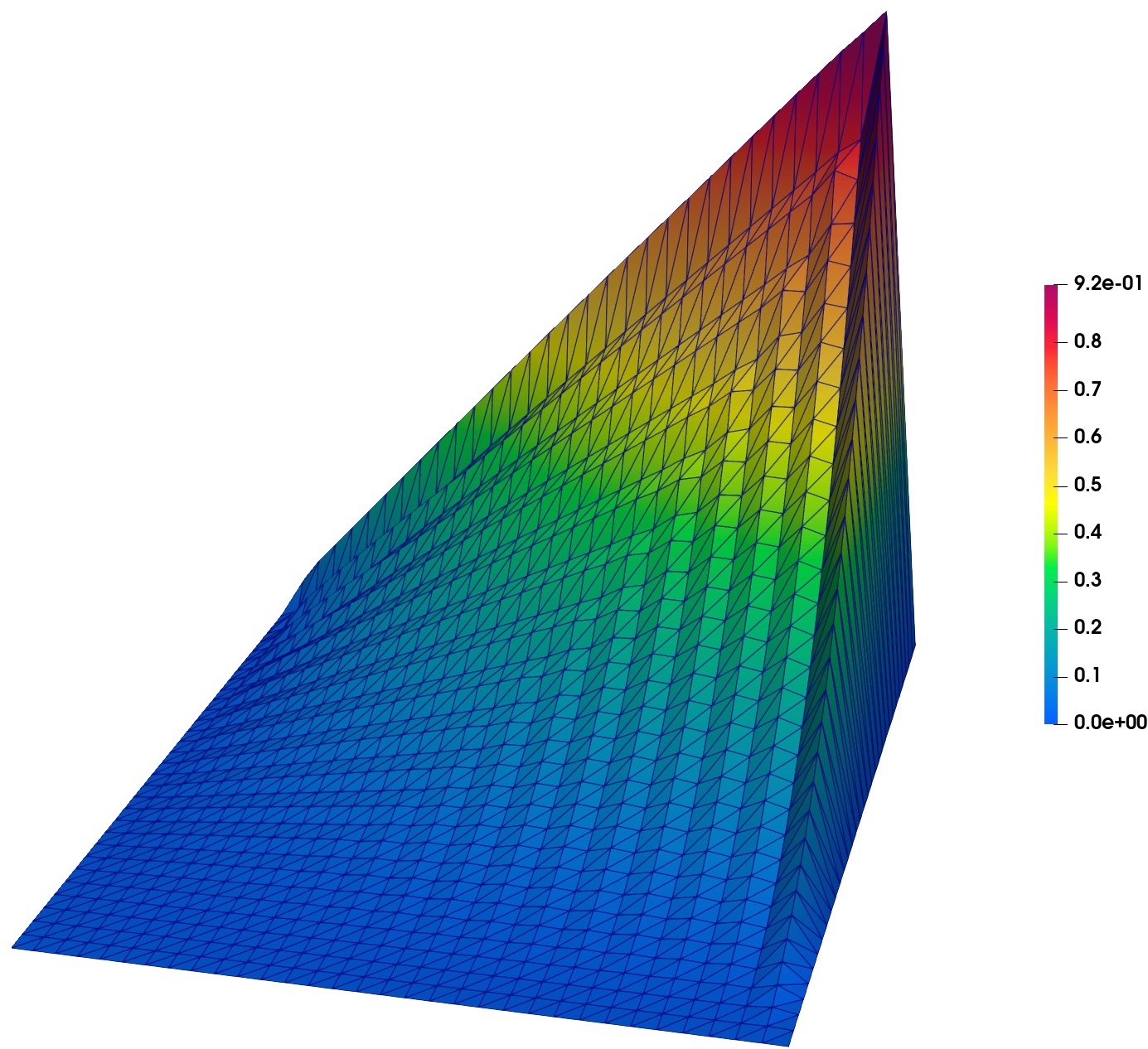}\hspace*{1em}
\includegraphics[width=0.3\textwidth]{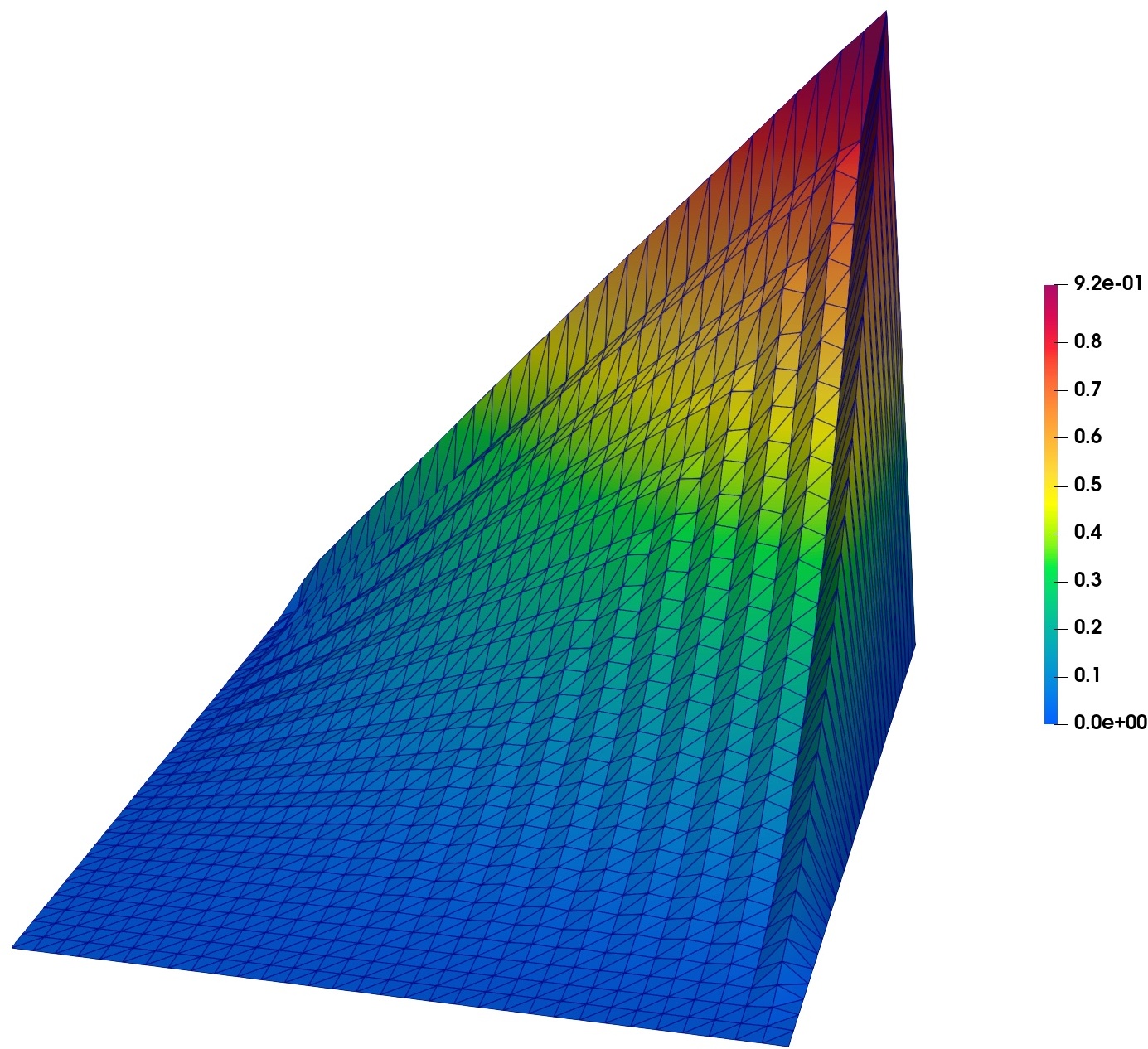}\hspace*{1em}
\includegraphics[width=0.3\textwidth]{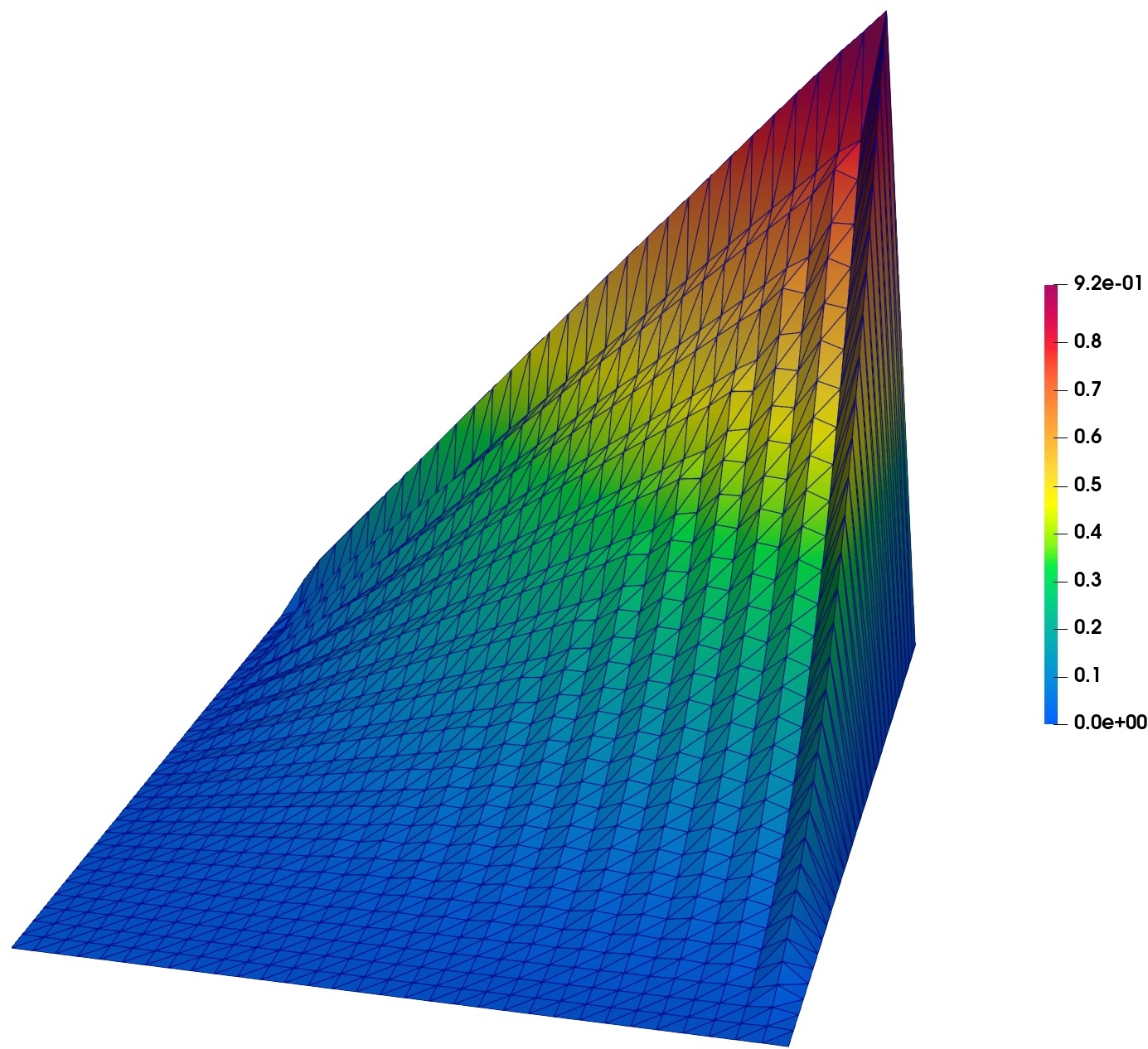}
\caption{Boundary layers, MC solutions,  Grid 1 / level~4, $\varepsilon=10^{-3}, 10^{-6},$ $10^{-9}$ (left to right).}\label{fig:boundary_layer_mc_diff_eps}
\vskip0.5cm

\includegraphics[width=0.3\textwidth]{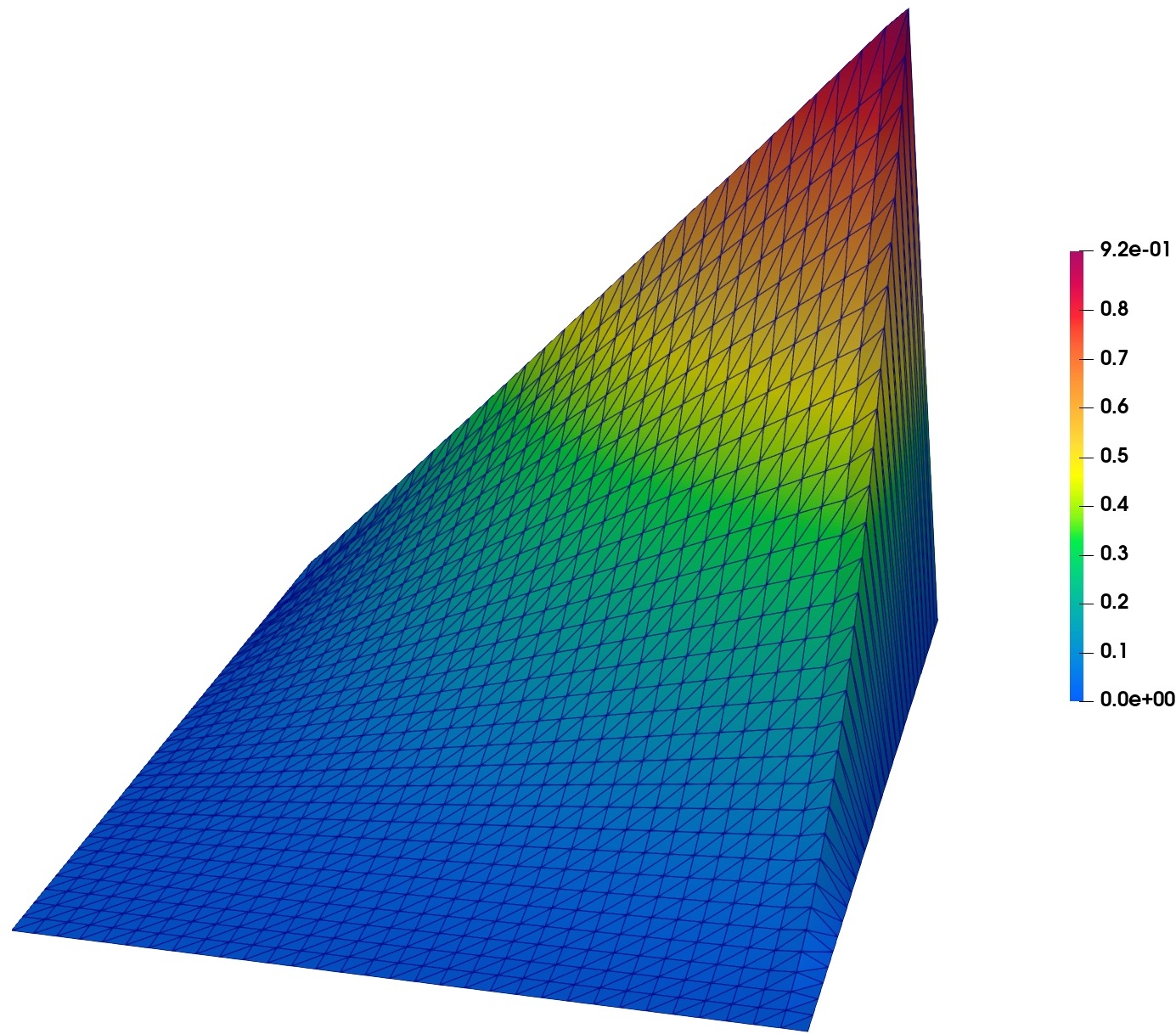}\hspace*{1em}
\includegraphics[width=0.3\textwidth]{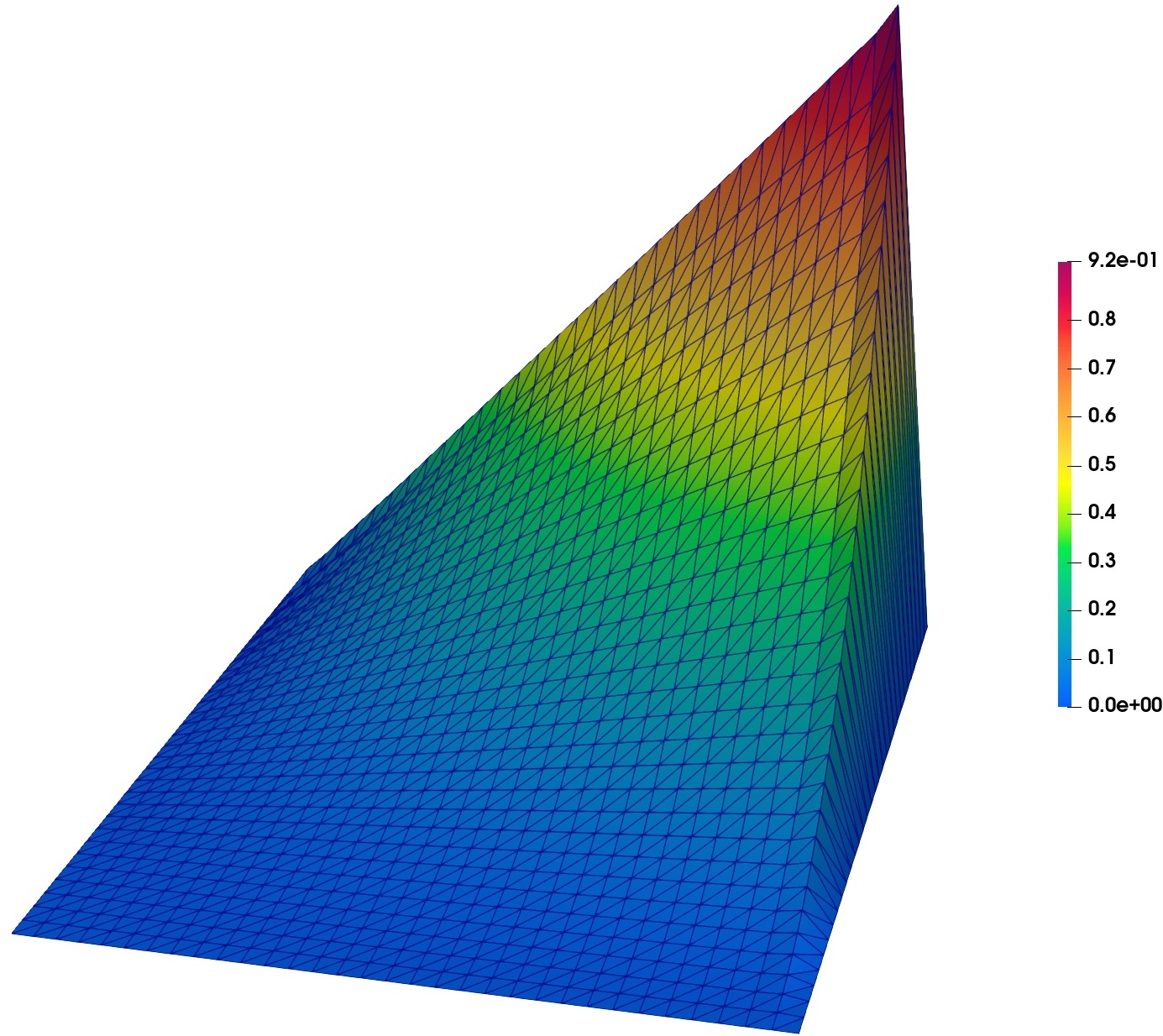}\hspace*{1em}
\includegraphics[width=0.3\textwidth]{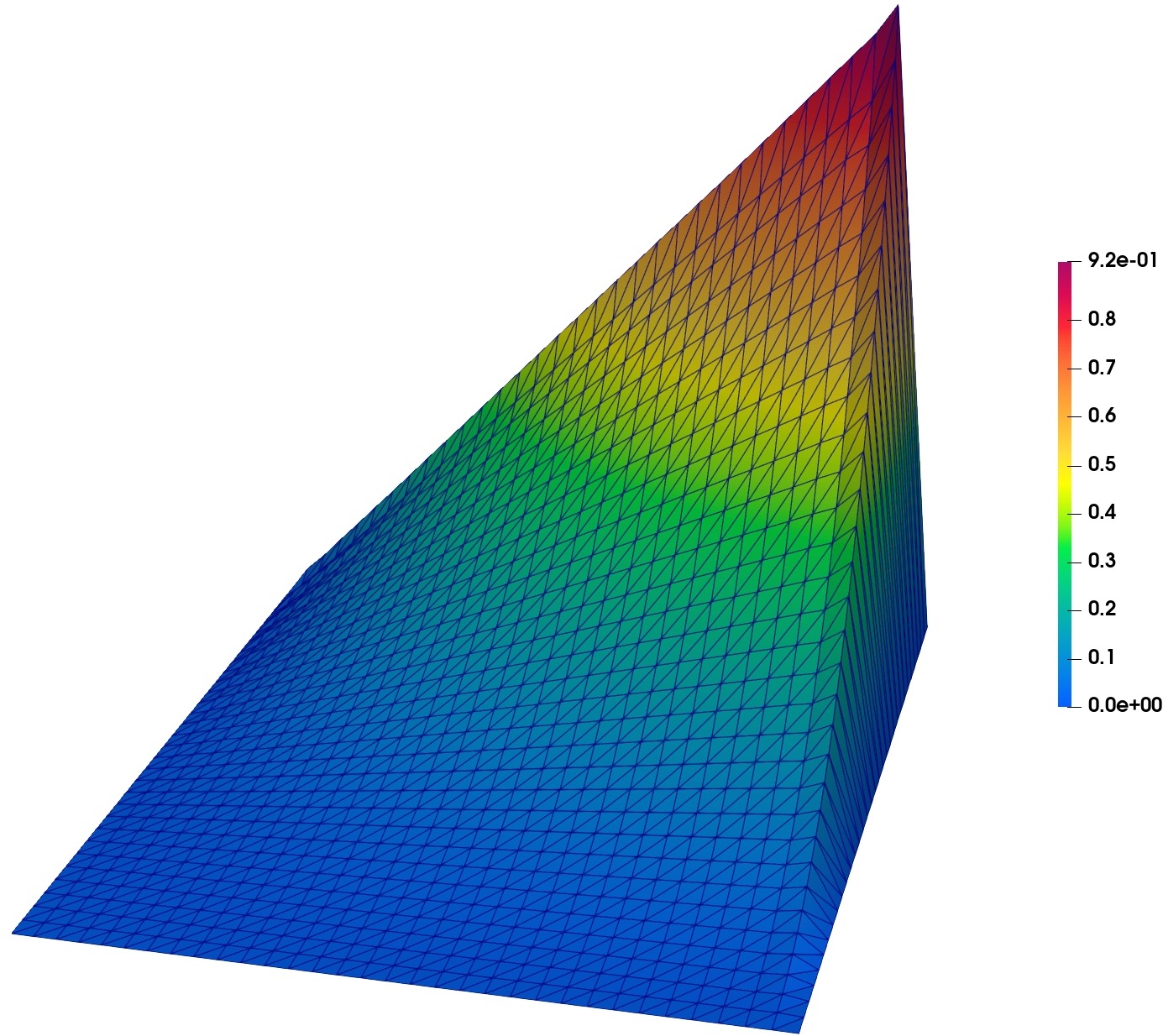}
\caption{Boundary layers, WMC solutions,  Grid 1 / level~4, $\varepsilon=10^{-3}, 10^{-6},$ $10^{-9}$ (left to right).}
\label{fig:boundary_layer_wmc_diff_eps}
\end{figure}

We ran numerical simulations for $\varepsilon\in \left\lbrace 10^{-3}, 10^{-6},10^{-9}\right\rbrace$ on Grid 1 / level 4. The MC and WMC results are presented in Figs~\ref{fig:boundary_layer_mc_diff_eps} and \ref{fig:boundary_layer_wmc_diff_eps}, respectively. Once again, the MC version produces spurious oscillations, whereas the WMC approximations are well resolved and free of ripples. Figure \ref{fig:boundary_layer} shows the WMC result for $\varepsilon=10^{-9}$ obtained using Grid 1 / level~7.

\begin{figure}[h!]\centering
\includegraphics[width=0.3\textwidth]{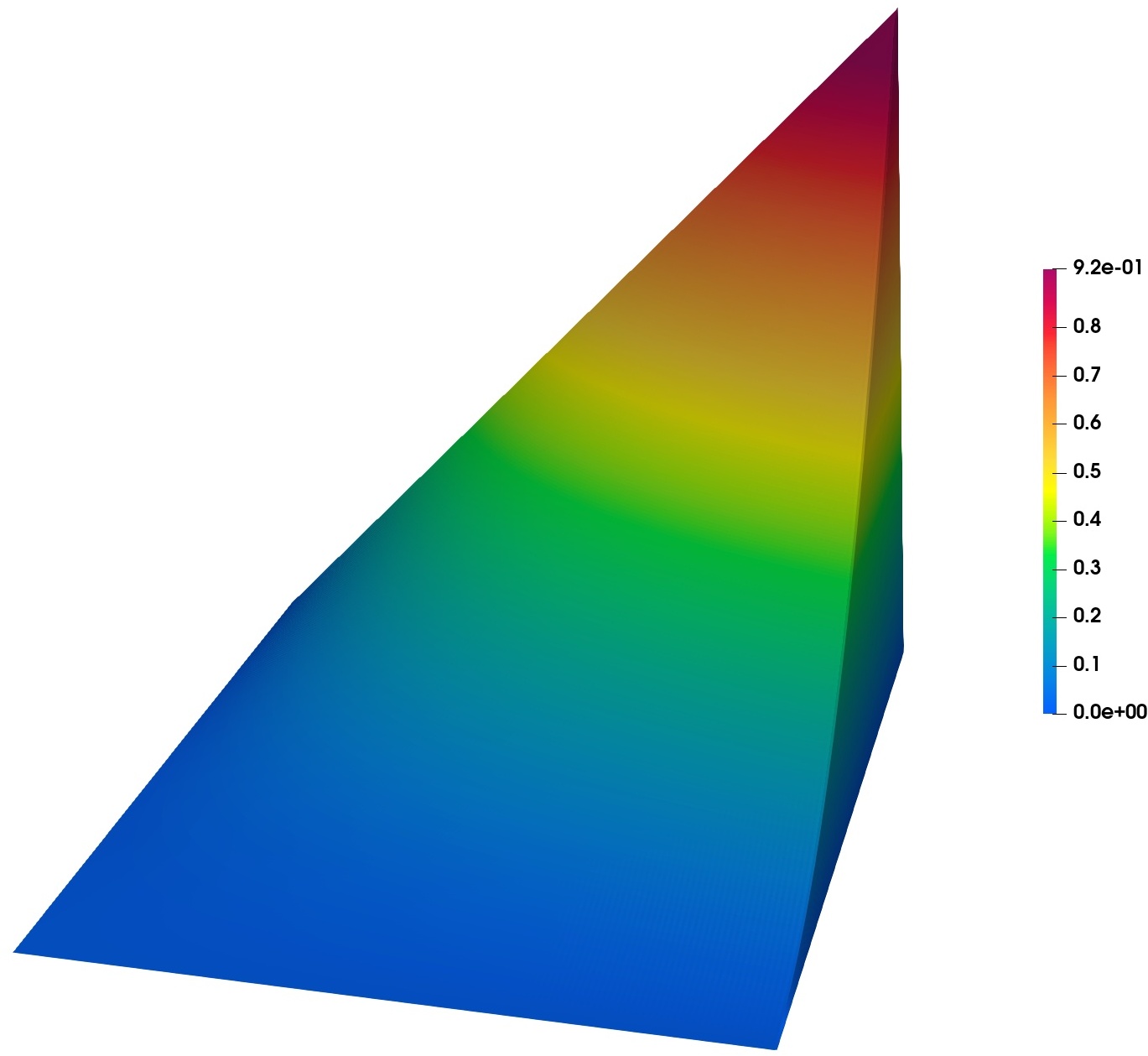}
\caption{Boundary layers,  WMC solution,  Grid 1 / level~7, $\varepsilon=10^{-9}$.}\label{fig:boundary_layer}
\end{figure}

%

\subsection{Circular layers}\label{ex:rotating_body}
In the next test, we perform numerical experiments for
the CDR equation \eqref{eq:cdr_eqn} with 
the non-constant velocity field $\mathbf{v}=(y,-x)^\top$. The source terms
are defined by
$$
f(x,y)=\left\lbrace\begin{array}{ll}
1 &\mathrm{if}\ \ 0.25\leq \sqrt{x^2+y^2}\leq 0.75,\\
0 &\mathrm{otherwise},
\end{array}\right.
$$
and $c(x,y)=1-f(x,y)$. 
We impose a homogeneous Neumann boundary condition on
$\Gamma_N=\{(x,0)\,:\, 0 < x < 1\}$ and set $u_D=0$ on the
Dirichlet boundary $\Gamma_D=\Gamma\backslash\Gamma_N$.
 
\begin{figure}[h!]\centering

\includegraphics[width=0.3\textwidth]{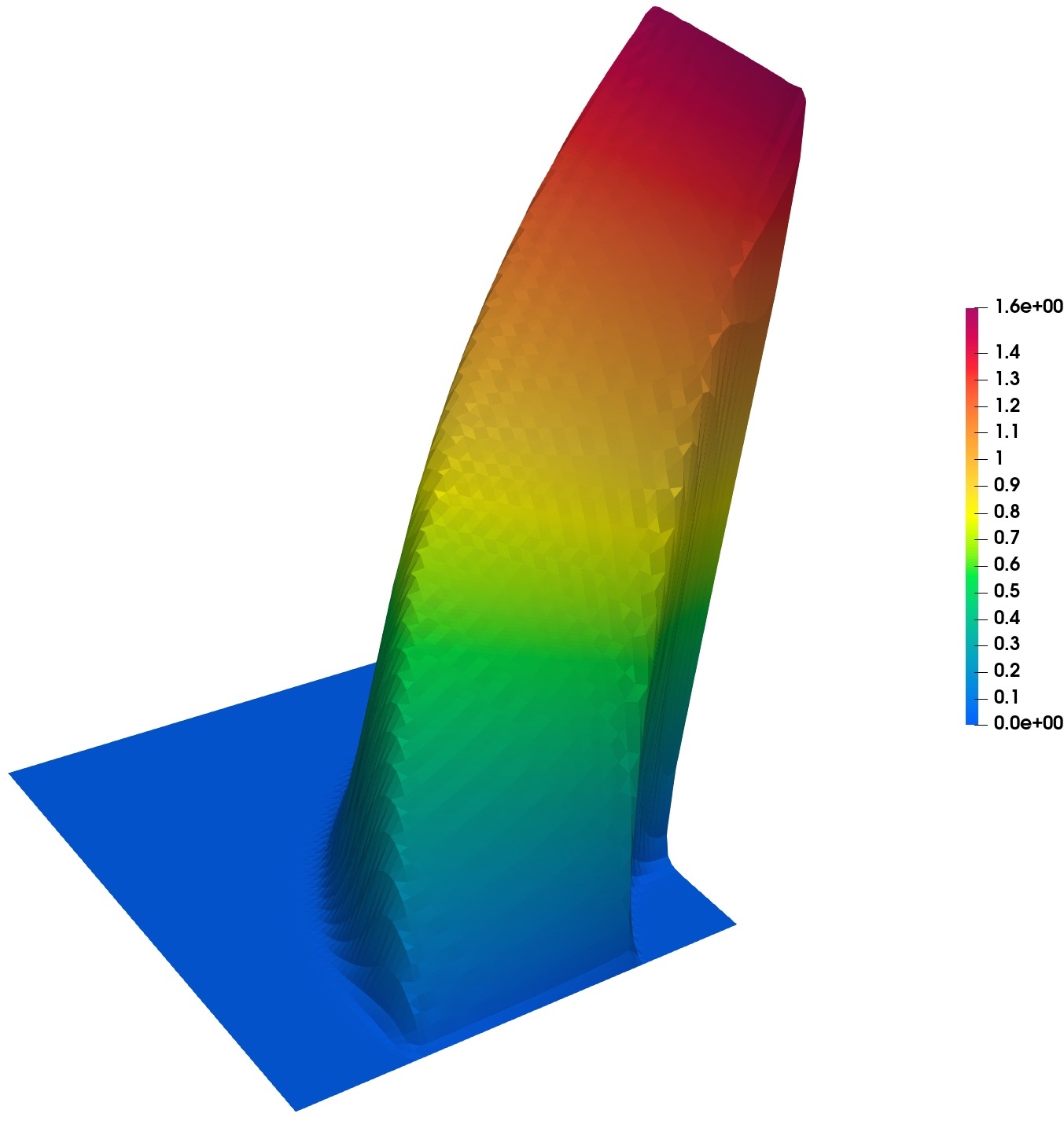}\hspace*{1em}
\includegraphics[width=0.3\textwidth]{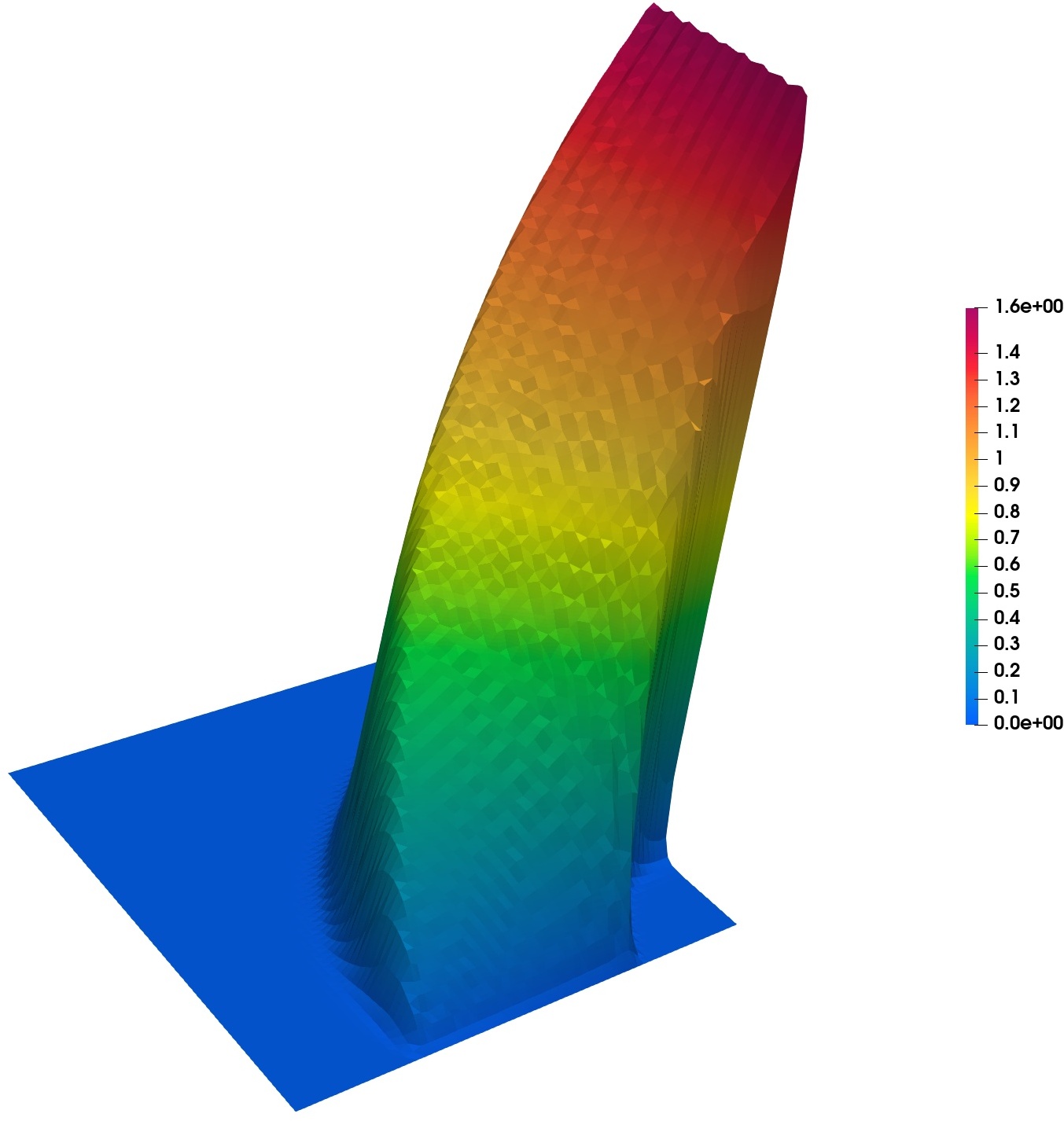}
\caption{Circular layers, MC solutions,  Grid~1 / level~5, $\varepsilon=10^{-4}, 10^{-6}$ (left to right).}\label{fig:rotating_body_mc_diff_eps}
\vskip0.5cm

\includegraphics[width=0.3\textwidth]{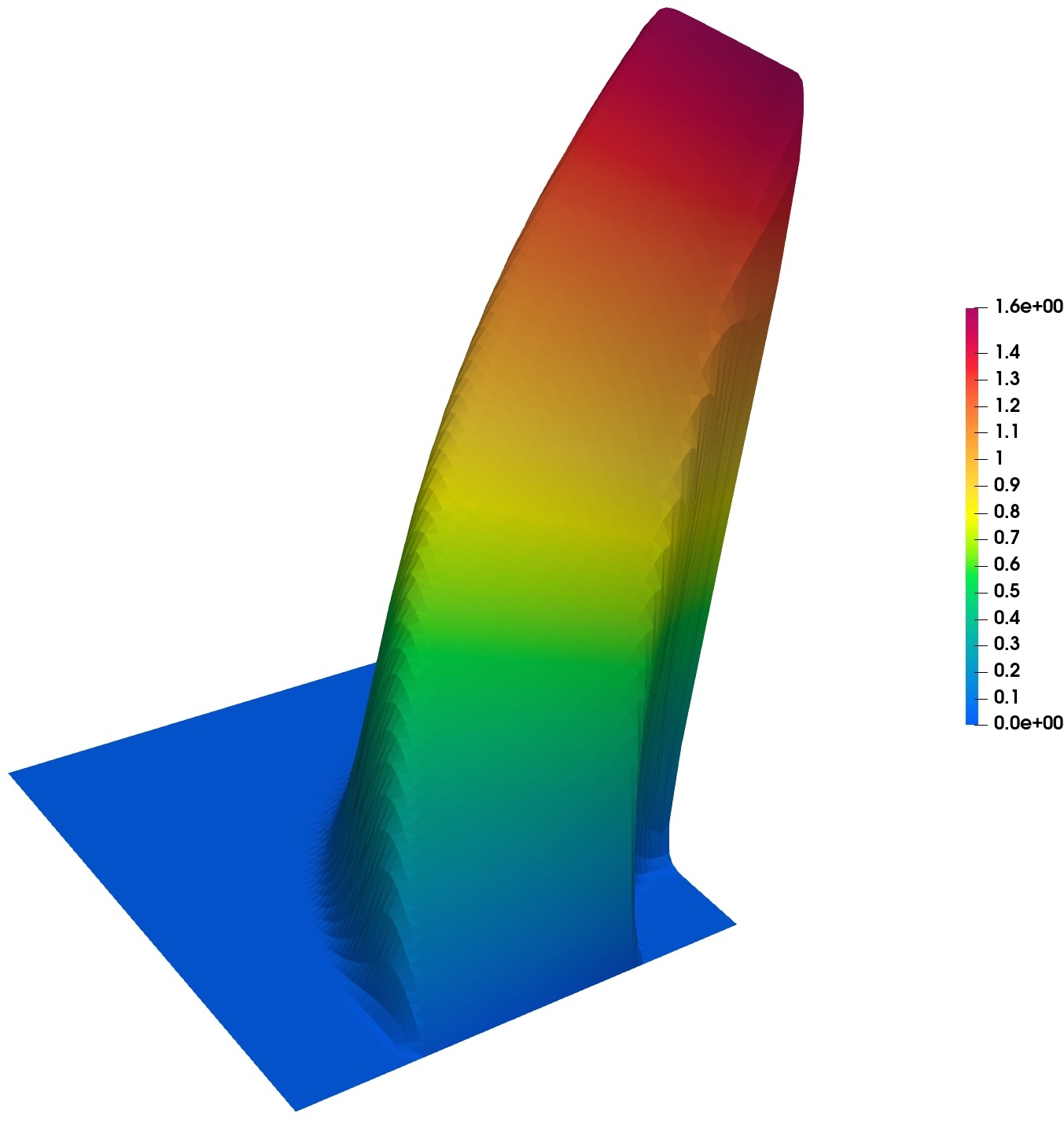}\hspace*{1em}
\includegraphics[width=0.3\textwidth]{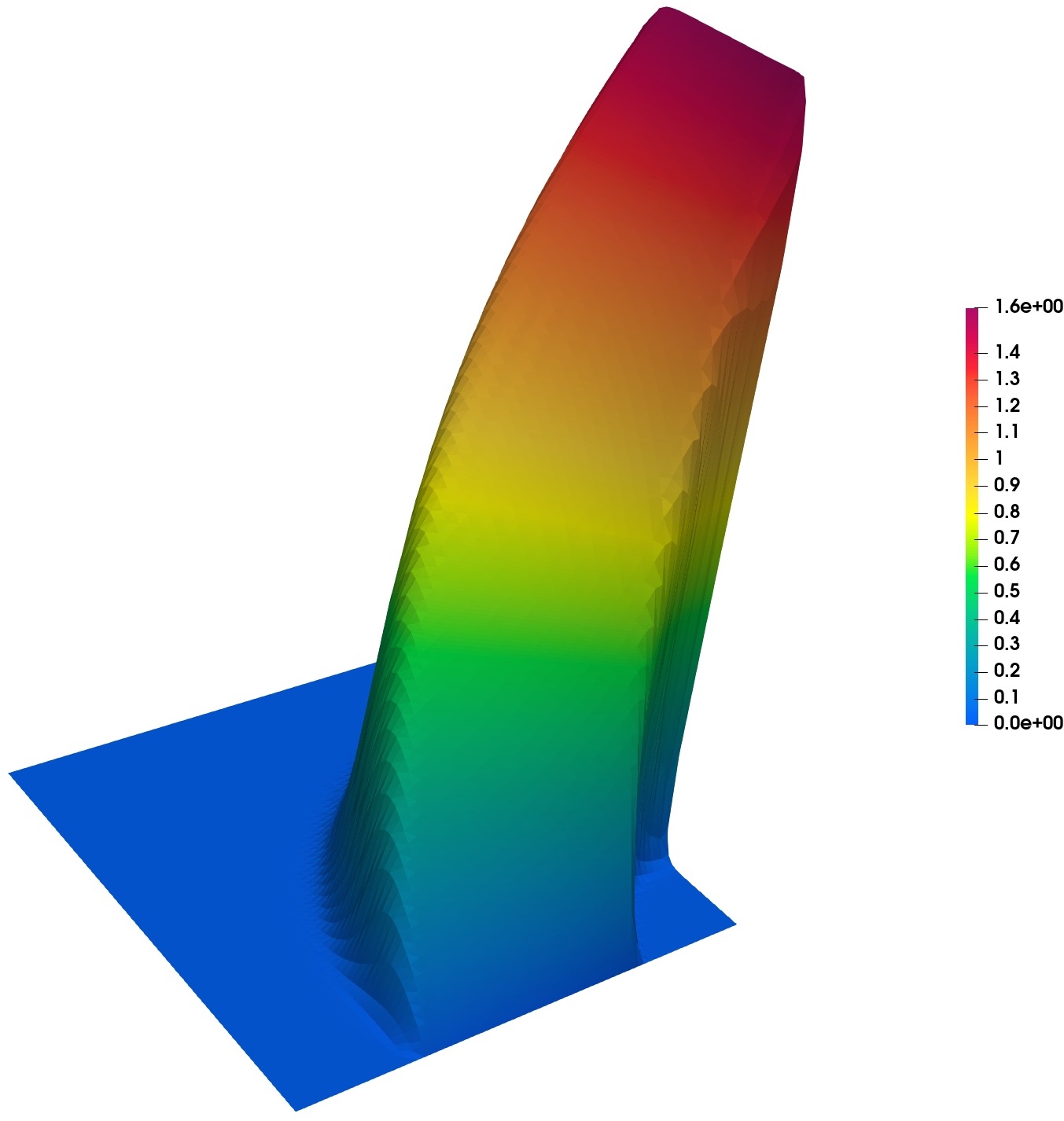}
\caption{Circular layers, WMC solutions,  Grid~1 / level~5, $\varepsilon=10^{-4}, 10^{-6}$ (left to right).}\label{fig:rotating_body_wmc_diff_eps}
\vskip0.5cm
\end{figure}

We ran numerical simulations for $\varepsilon\in \left\lbrace 10^{-4}, 10^{-6}\right\rbrace$ on Grid 1 / level 5. The MC and WMC results
are presented in Figs~\ref{fig:rotating_body_mc_diff_eps} and \ref{fig:rotating_body_wmc_diff_eps}, respectively. Spurious ripples can again be seen in the MC solution of the CDR equation with
$\varepsilon=10^{-6}$. Although the exact solution is not linear between the circular internal layers, the WMC approximation is free of ripples. Figure \ref{fig:rotating_body} shows the WMC result for $\varepsilon=10^{-4}$ obtained using Grid 1 / level~7.

\begin{figure}[h!]\centering
\includegraphics[width=0.3\textwidth]{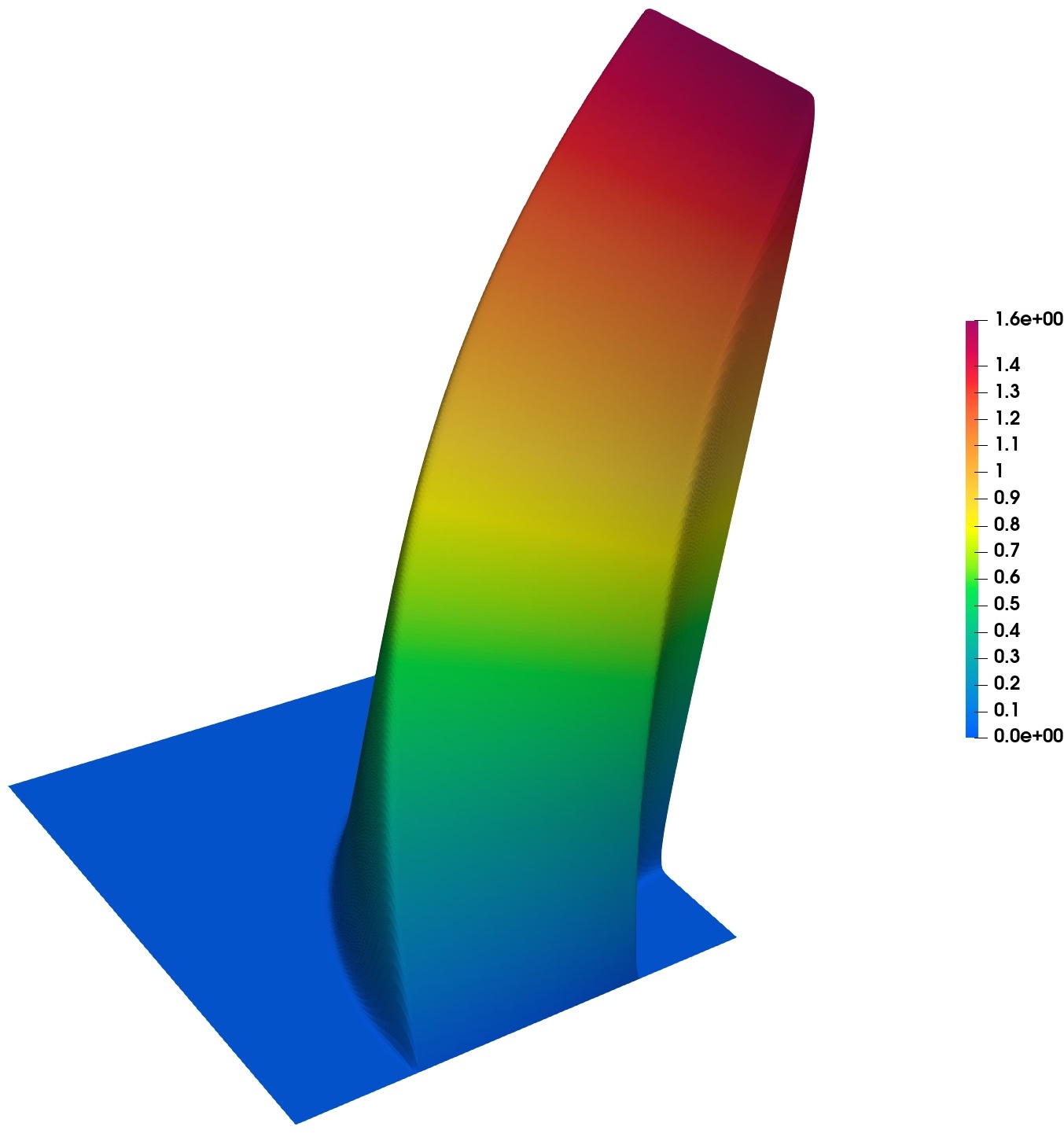}
\caption{Circular layers,  WMC solution,  Grid 1 / level~7, $\varepsilon=10^{-4}$.}\label{fig:rotating_body}
\end{figure}

\subsection{Circular convection}\label{ex:circular_convection}
In this final example, we study the grid convergence properties of the WMC method. Following Lohmann \cite[Eq.~(3.24)]{Loh19}, we
consider equation~\eqref{eq:cdr_eqn} with $\varepsilon=0, \mathbf{v}=(y,-x)^\top$, and $c\equiv 1$. The smooth exact solution and the inflow boundary condition are given by
$$
u(x,y)=\exp\left(-100\left(\sqrt{x^2+y^2}-0.7\right)^2\right),\qquad 0\leq x,y\leq 1.
$$
The right-hand side satisfies $f=cu$.

Figure \ref{fig:circular_convection} shows the Grid 1 / level 7 solution obtained using the WMC limiter.
The experimental order of convergence (E.O.C.) w.r.t. a norm $\|\cdot\|$
is determined using the formula
$$
\mathrm{E.O.C.}=\log_2\left( \frac{\|u-u_{2h}\|}{\|u-u_{h}\|}\right).
$$

\begin{figure}[h!]\centering
\includegraphics[width=0.375\textwidth]{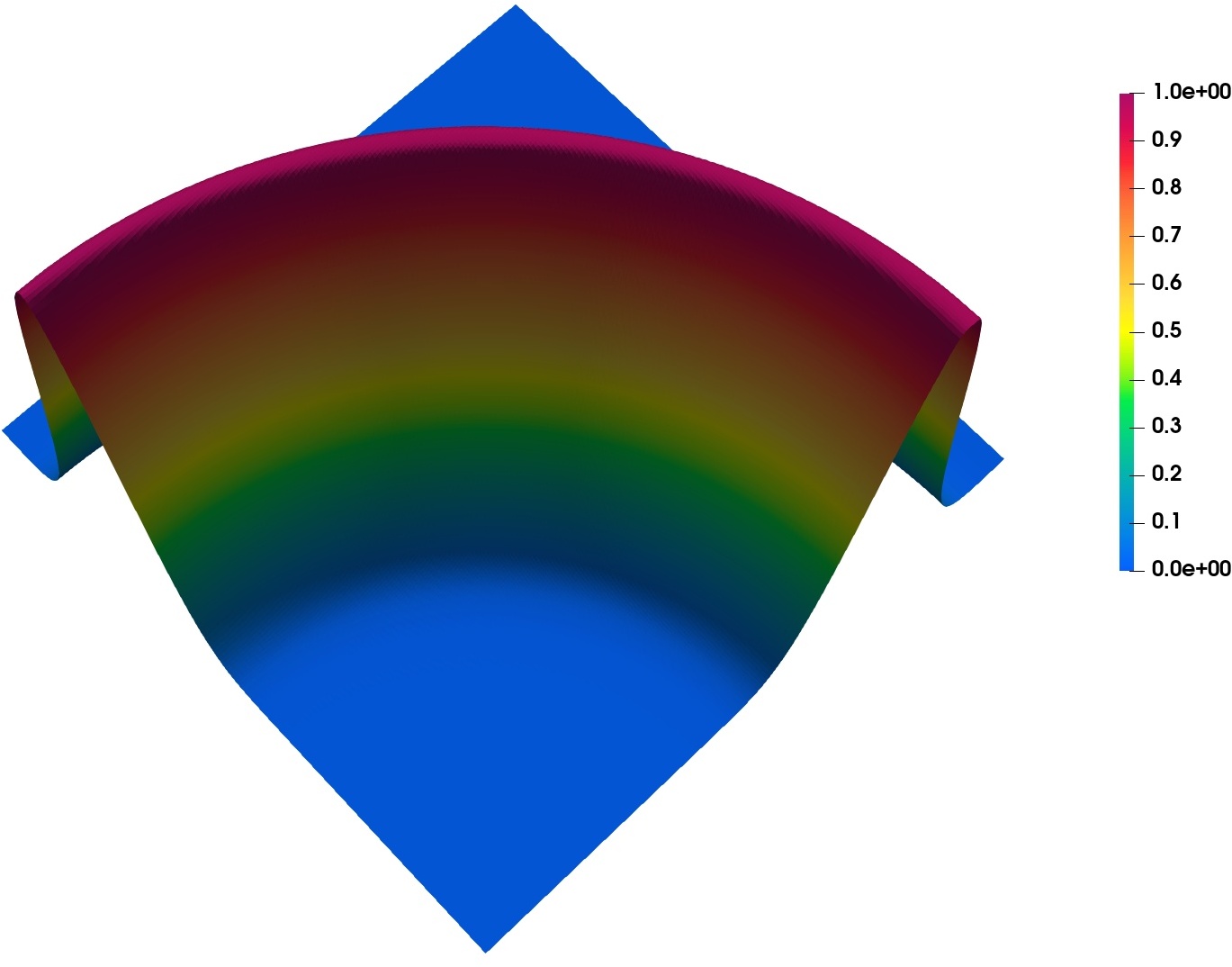}
\caption{Circular convection, WMC solution, Grid 1 / level~7.}\label{fig:circular_convection}
\end{figure}

In Tables \ref{tab:circular_convection_error_grid1} and \ref{tab:circular_convection_error_grid2}, we list the $L^1$ and $L^2$ errors for both types of computational meshes. The Grid 1 and Grid 2 convergence rates approach 2 on fine mesh levels. We conclude that the WMC treatment of source terms does not degrade the convergence behavior of our scheme. Further improvements could be achieved using linearity-preserving local bounds (cf.~\cite[Example 6.1]{Ku20}).

\begin{table}[h!]
    \centering
    \begin{tabular}{|ccccc|}
       \hline
        \textrm{Level} & $\|u-u_h\|_{L^2}$ & E.O.C. & $\|u-u_h\|_{L^1}$ & E.O.C. \\ 
        \hline \hline
3& \num{0.095276909014531}&\num{0}&\num{0.057723388291951}&\num{0}\\
4& \num{0.038372306081693}&\num{1.3120611398796116}&\num{0.018893367506366}&\num{1.6112761197550955}\\
5& \num{0.01451191242234}&\num{1.402827815474684}&\num{0.0057690092297079}&\num{1.71148439033664}\\
6& \num{0.0045088527953071}&\num{1.6864053396570695}&\num{0.0015053242098999}&\num{1.9382493299718966}\\
7& \num{0.0012726041914149}&\num{1.8249766314115696}&\num{0.00032900036577443}&\num{2.193913148537673}\\
8& \num{0.00029652705261797}&\num{2.101548143161334}&\num{0.000059768047877223}&\num{2.460642860061867}\\
\hline
    \end{tabular}
    \caption{Circular convection, $\|\cdot\|_{L^2}$ and $\|\cdot\|_{L^1}$ errors for Grid~1 triangulations.}
    \label{tab:circular_convection_error_grid1}
\end{table}

\begin{table}[h!]
    \centering
    \begin{tabular}{|ccccc|}
       \hline
        \textrm{Level} & $\|u-u_h\|_{L^2}$ & E.O.C. & $\|u-u_h\|_{L^1}$ & E.O.C. \\ 
        \hline \hline
3 & \num{ 0.059103898044568 }&\num{ 0 }& \num{ 0.032231134811308 }& \num{ 0 }\\
4 & \num{ 0.023430355995447 }&\num{ 1.334874408412071 }& \num{ 0.010010115253685 }& \num{ 1.686996399423337 }\\
5 & \num{ 0.0080937855150686 }&\num{ 1.5334923510681087 }& \num{ 0.0029157734774828 }& \num{ 1.779508036964343 }\\
6 & \num{ 0.0024763258847636 }&\num{ 1.7086134322890378 }& \num{ 0.00075833701014984 }& \num{ 1.9429676034311334 }\\
7 & \num{ 0.00068036547657055 }&\num{ 1.8638193438233794 }& \num{ 0.00015363394740536 }& \num{ 2.303342101048385 }\\
8 & \num{ 0.00016789195541868 }&\num{ 5.3407049269569775 }& \num{ 0.000007614616136816 }& \num{ 4.334581912843703 }\\
\hline
    \end{tabular}
    \caption{Circular convection, $\|\cdot\|_{L^2}$ and $\|\cdot\|_{L^1}$ errors for Grid~2 triangulations.}
    \label{tab:circular_convection_error_grid2}
\end{table}

\section{Conclusions}\label{sec:summary}

This paper demonstrates that flux correction tools designed for time-dependent hyperbolic conservation laws require careful adaptation to other types of partial differential equations. In particular, the numerical treatment of source terms becomes important in the steady state limit, which may be affected by algebraic manipulations of the weighted residual formulation. The nonlinear stabilization term of the proposed method includes fluxes that modify the standard Galerkin discretization of source terms in an appropriate manner. The underlying design philosophy is based on an analogy with a well-balanced finite element scheme for the shallow water equations. It is hoped that this analogy (and the way in which it is exploited in the present paper) will advance the development of next-generation flux limiters for finite element discretizations of balance laws.

\medskip
\paragraph{\bf Acknowledgments}
The work of D. Kuzmin was supported by the German Research Foundation (Deutsche
Forschungsgemeinschaft, DFG) under grant KU 1530/23-3. {The work
of P. Knobloch was supported by the grant No.~22-01591S of the Czech Science
Foundation.}

 \bibliographystyle{plain}
\bibliography{MCL}      

\begin{thebibliography}{10}

\bibitem{AD93}
Paul Arminjon and Alain Dervieux.
\newblock Construction of {TVD}-like artificial viscosities on two-dimensional
  arbitrary {FEM} grids.
\newblock {\em J. Comput. Phys.}, 106(1):176--198, 1993.

\bibitem{audusse2015}
Emmanuel Audusse, Christophe Chalons, and Philippe Ung.
\newblock A simple well-balanced and positive numerical scheme for the
  shallow-water system.
\newblock {\em Communications in Mathematical Sciences}, 13(5):1317--1332,
  2015.

\bibitem{BB17}
Santiago Badia and Jes\'us Bonilla.
\newblock Monotonicity-preserving finite element schemes based on
  differentiable nonlinear stabilization.
\newblock {\em Comput. Methods Appl. Mech. Engrg.}, 313:133--158, 2017.

\bibitem{BJK23}
Gabriel~R. Barrenechea, Volker John, and Petr Knobloch.
\newblock Finite element methods respecting the discrete maximum principle for
  convection-diffusion equations.
\newblock {\em SIAM Rev.}, 66(1), 2024.

\bibitem{donat2011}
Rosa Donat and Anna Mart{\'\i}nez-Gavara.
\newblock Hybrid second order schemes for scalar balance laws.
\newblock {\em Journal of Scientific Computing}, 48:52--69, 2011.

\bibitem{fjordholm2011}
Ulrik~S. Fjordholm, Siddhartha Mishra, and Eitan Tadmor.
\newblock Well-balanced and energy stable schemes for the shallow water
  equations with discontinuous topography.
\newblock {\em Journal of Computational Physics}, 230(14):5587--5609, 2011.

\bibitem{gascon2001}
Llanos Gasc{\'o}n and Jos\'e~Miguel Corber{\'a}n.
\newblock Construction of second-order {TVD} schemes for nonhomogeneous
  hyperbolic conservation laws.
\newblock {\em Journal of Computational Physics}, 172(1):261--297, 2001.

\bibitem{hajduk2022diss}
Hennes Hajduk.
\newblock {\em Algebraically Constrained Finite Element Methods for Hyperbolic
  Problems With Applications to Geophysics and Gas Dynamics}.
\newblock PhD thesis, TU Dortmund University, 2022.

\bibitem{hajduk2022}
Hennes Hajduk and Dmitri Kuzmin.
\newblock Bound-preserving and entropy-stable algebraic flux correction schemes
  for the shallow water equations with topography.
\newblock Technical report, arXiv:2207.07261 [math.NA], 2022.

\bibitem{JMT96}
Volker John, Joseph~M. Maubach, and Lutz Tobiska.
\newblock Nonconforming streamline-diffusion-finite-element-methods for
  convection-diffusion problems.
\newblock {\em Numerische Mathematik}, 78(2):165--188, December 1997.

\bibitem{Kno23}
Petr Knobloch.
\newblock An algebraically stabilized method for convection-diffusion-reaction
  problems with optimal experimental convergence rates on general meshes.
\newblock {\em Numer. Algorithms}, 94(2):547--580, 2023.

\bibitem{Ku20}
Dmitri Kuzmin.
\newblock Monolithic convex limiting for continuous finite element
  discretizations of hyperbolic conservation laws.
\newblock {\em Computer Methods in Applied Mechanics and Engineering},
  361:112804, 2020.

\bibitem{kuzmin2023}
Dmitri Kuzmin and Hennes Hajduk.
\newblock {\em Property-Preserving Numerical Schemes for Conservation Laws}.
\newblock World Scientific, 2023.

\bibitem{leveque1998}
Randall~J. LeVeque.
\newblock Balancing source terms and flux gradients in high-resolution
  {G}odunov methods: {T}he quasi-steady wave-propagation algorithm.
\newblock {\em Journal of Computational Physics}, 146(1):346--365, 1998.

\bibitem{Loh19}
Christoph Lohmann.
\newblock {\em Physics-Compatible Finite Element Methods for Scalar and
  Tensorial Advection Problems}.
\newblock Springer, 2019.

\bibitem{LMPP94}
Paulo R.~M. Lyra, Kenneth Morgan, Jaime Peraire, and Joaquim Peir\'o.
\newblock {TVD} algorithms for the solution of the compressible {E}uler
  equations on unstructured meshes.
\newblock {\em Internat. J. Numer. Methods Fluids}, 19(9):827--847, 1994.

\bibitem{noelle2006}
Sebastian Noelle, Normann Pankratz, Gabriella Puppo, and Jostein~R Natvig.
\newblock Well-balanced finite volume schemes of arbitrary order of accuracy
  for shallow water flows.
\newblock {\em Journal of Computational Physics}, 213(2):474--499, 2006.

\bibitem{RST08}
Hans-G\"org Roos, Martin Stynes, and Lutz Tobiska.
\newblock {\em Robust Numerical Methods for Singularly Perturbed Differential
  Equations}, volume~24 of {\em Springer Series in Computational Mathematics}.
\newblock Springer-Verlag, Berlin, second edition, 2008.

\bibitem{sweby1989}
Peter~K. Sweby.
\newblock {"TVD"} schemes for inhomogeneous conservation laws.
\newblock In Josef Ballmann and Rolf Jeltsch, editors, {\em Nonlinear
  Hyperbolic Equations—Theory, Computation Methods, and Applications:
  Proceedings of the Second International Conference on Nonlinear Hyperbolic
  Problems, Aachen, FRG, March 14 to 18, 1988}, pages 599--607. Springer, 1989.

\bibitem{Te77}
Roger Temam.
\newblock {\em Navier-{S}tokes equations. {T}heory and numerical analysis}.
\newblock North-Holland Publishing Co., Amsterdam-New York-Oxford, 1977.
\newblock Studies in Mathematics and its Applications, Vol. 2.

\bibitem{WB16}
Ulrich Wilbrandt, Clemens Bartsch, Naveed Ahmed, Najib Alia, Felix Anker, Laura
  Blank, Alfonso Caiazzo, Sashikumaar Ganesan, Swetlana Giere, Gunar Matthies,
  Raviteja Meesala, Abdus Shamim, Jagannath Venkatesan, and Volker John.
\newblock {P}ar{M}oo{N} – {A} modernized program package based on mapped
  finite elements.
\newblock {\em Computers and Mathematics with Applications}, 74:74--88, 2016.

\end{thebibliography}

\end{document}